
%

\documentclass[reqno]{amsart}
\usepackage{url}
\newtheorem{theorem}{Theorem}[section]

\newtheorem{corollary}[theorem]{Corollary}
\newtheorem{conjecture}[theorem]{Conjecture}

\theoremstyle{definition}
\newtheorem{definition}[theorem]{Definition}

\theoremstyle{remark}

\numberwithin{equation}{section}
\usepackage{algorithmic}
\raggedbottom
\usepackage[top=3cm,bottom=2cm,right=2cm,left=2cm]{geometry}
\usepackage{hyperref}
\usepackage{amsmath}
\usepackage{amssymb}
\allowdisplaybreaks

\begin{document}

\title[Congruence Families Modulo Powers of 5 for Generalized Frobenius Partition Functions]{Old Meets New: Connecting Two Infinite Families of Congruences Modulo Powers of 5 for Generalized Frobenius Partition Functions}


\author{Frank G. Garvan}
\address{Department of Mathematics, University of Florida, P.O. Box 118105, Gainesville, FL 32611-8105, USA}
\curraddr{}
\email{fgarvan@ufl.edu}
\thanks{}

\author{James A. Sellers}
\address{Department of Mathematics and Statistics, University of Minnesota Duluth, 1049 University Drive, Duluth, MN 55812, USA}
\curraddr{}
\email{jsellers@d.umn.edu}
\thanks{}

\author{Nicolas Allen Smoot}
\address{Faculty of Mathematics, University of Vienna, Oskar-Morgenstern-Platz 1, 1090 Vienna, Austria}
\curraddr{}
\email{nicolas.allen.smoot@univie.ac.at}
\thanks{}

\keywords{Partition congruences, infinite congruence family, modular functions, Frobenius partitions, modular curve}

\subjclass[2010]{Primary 11P83, Secondary 30F35}

\date{}

\dedicatory{}

\begin{abstract}
In 2012 Paule and Radu proved a difficult family of congruences modulo powers of 5 for Andrews' 2-colored generalized Frobenius partition function. The family is associated with the classical modular curve of level 20. We demonstrate the existence of a congruence family for a related generalized Frobenius partition function associated with the same curve. We construct an isomorphism between this new family and the original family of congruences via a mapping on the associated rings of modular functions.  The pairing of the congruence families provides a new strategy for future work on congruences associated with modular curves of composite level.  We show how a similar approach can be made to multiple other recent examples in the literature.  We also give some important insights into the behavior of these congruence families with respect to the Atkin--Lehner involution which proved very important in Paule and Radu's original proof.
\end{abstract}

\maketitle

\section{Introduction}

In his 1984 AMS Memoir, Andrews \cite{Andrews} introduced and extensively studied two families of generalized Frobenius partition functions.  The second such family of functions enumerates generalized $k$-colored Frobenius partitions of weight $n$, where the functions in question are denoted $c\phi_k(n)$.  In his Memoir, Andrews proved a variety of properties satisfied by these functions $c\phi_k(n)$; in particular, Andrews proved that, for all $n\geq 0$, $c\phi_2(5n+3) \equiv 0 \pmod{5}$.

In 1994 the second author proposed the following congruence family for the generalized 2-colored Frobenius partition function $c\phi_2(n)$:

\begin{theorem}[Paule, Radu]\label{Thm1}
Let $n,\alpha\in\mathbb{Z}_{\ge 1}$ such that $12n\equiv 1\pmod{5^{\alpha}}$.  Then $c\phi_2(n)\equiv 0\pmod{5^{\alpha}}$.
\end{theorem}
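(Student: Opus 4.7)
The plan is to adapt the modular function approach to Ramanujan-type congruence families, as refined by Paule--Radu and others from the classical Watson--Atkin framework. One begins by expressing the generating function $\sum_{n\ge 0} c\phi_2(n)\,q^n$ in terms of an eta-quotient and, for each $\alpha\ge 1$, introduces a dissection function $L_\alpha(q)$ that isolates the coefficients $c\phi_2(n)$ with $12n\equiv 1\pmod{5^\alpha}$, corrected by a suitable eta-quotient prefactor so that $L_\alpha$ becomes a weakly holomorphic modular function on $\Gamma_0(N)$ with $N=20$, with poles confined to a controlled set of cusps. The theorem is then equivalent to the statement that $L_\alpha \in 5^\alpha\,\mathbb{Z}[[q]]$.

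Next I would determine the ring $\mathcal{R}$ of modular functions on $X_0(20)$ holomorphic away from the chosen pole cusp(s). Because $X_0(20)$ has genus one, $\mathcal{R}$ cannot be a polynomial ring in a single Hauptmodul; rather it must be presented as a finitely generated free module over $\mathbb{Z}[t]$ for a carefully chosen generator $t$, with basis elements $1, y_1, \ldots, y_r$ taken as eta-quotients whose divisors are accessible via Ligozat's formula. Establishing this presentation, and in particular picking an integral basis compatible with the $5$-adic filtration, is the algebraic foundation for everything to follow.

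The inductive core of the argument is to prove a recurrence of the form
$$L_{\alpha+1} \;=\; \mathcal{U}_5\bigl(L_\alpha \cdot \phi\bigr),$$
where $\mathcal{U}_5$ extracts the $q$-coefficients in the relevant residue class modulo $5$ and $\phi$ is a fixed auxiliary modular function making the right-hand side again lie in $\mathcal{R}$. Expanding the action of $\mathcal{U}_5$ on the basis $\{1,y_1,\ldots,y_r\}$ yields a ``transfer'' matrix with entries in $\mathbb{Z}[t]$, and the key technical estimate is that this matrix sends $\mathcal{R}$ into $5\cdot\mathcal{R}$ uniformly; iterating then forces $L_\alpha \in 5^\alpha\,\mathcal{R}$, and hence the claimed congruence on Fourier coefficients.

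The main obstacle is the positive genus of $X_0(20)$ together with its multiple cusps. Unlike the classical Ramanujan families on $X_0(5)$, $X_0(7)$, $X_0(11)$, where the curve is rational and a single Hauptmodul controls everything, here the module structure of $\mathcal{R}$ is genuinely higher-dimensional, and one must simultaneously control several coefficient streams. I expect the Atkin--Lehner involution $w_5$, flagged in the abstract as decisive for Paule and Radu's original proof, to be the essential tool for coping with this: it pairs cusps and allows pole-order data to be transferred between them, collapsing what would otherwise be an unmanageable web of estimates into a single closed induction. Getting the choice of basis, the auxiliary function $\phi$, and the $5$-adic estimates on the transfer matrix all to harmonize is the genuinely delicate part of the argument.
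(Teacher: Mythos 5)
Your skeleton (dissection functions $L_\alpha$ on $\mathrm{X}_0(20)$, a $U_5$-type recurrence, and a presentation of the relevant function space as a finitely generated $\mathbb{Z}[t]$-module) is the correct frame, and it is the frame Paule and Radu use. But the load-bearing step of your argument --- that the transfer matrix of $\mathcal{U}_5$ on the natural basis ``sends $\mathcal{R}$ into $5\cdot\mathcal{R}$ uniformly,'' so that iteration gives $L_\alpha\in 5^\alpha\mathcal{R}$ --- is precisely the step that fails for this family, and the paper says so explicitly: the localized ring $\mathbb{Z}[x]_{\mathcal{S}}+y\mathbb{Z}[x]_{\mathcal{S}}$ (and the rank-6 $\mathbb{Z}[t]$-module on the basis $1,x,x^2,y,xy,x^2y$ that it induces) confirms any finite number of cases but does not admit a stable congruence ideal sequence, so the induction cannot be closed there. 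The actual content of the Paule--Radu proof, which is absent from your proposal, is the discovery that each $L_\alpha^{(1)}$ lies in a much smaller module $\mathbb{Z}[t]+p^{(1)}_{a}\mathbb{Z}[t]$, with two specific generators $p^{(1)}_{0},p^{(1)}_{1}$ chosen according to the parity of $\alpha$ (correspondingly, two different $U$-operators alternate). It is only in this reduced-rank representation that the $5$-adic gains can be tracked and the induction completed; and even there the gain is not a uniform factor of $5$ per step on the whole module but requires refined valuation estimates on the polynomial coefficients in $t$.

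Two further corrections. First, the relevant Atkin--Lehner involution is not $w_5$ but $W=\left(\begin{smallmatrix}4&-1\\100&-24\end{smallmatrix}\right)$, of determinant $4$; and its role in the original proof is not cusp-pairing bookkeeping but the empirical observation that $L_\alpha^{(1)}(W\tau)$, after multiplication by a suitable eta quotient (equivalently, by Ramanujan's $\varphi$), has Fourier support only on exponents $\equiv 0,1\pmod 4$ --- this is what led Paule and Radu to the generators $p^{(1)}_0,p^{(1)}_1$ in the first place. Second, since $t$ is the Hauptmodul of $\mathrm{X}_0(5)$ rather than a function with a single pole on $\mathrm{X}_0(20)$, the module presentation over $\mathbb{Z}[t]$ is not the Weierstra\ss-gap presentation of $\mathcal{M}^0(\mathrm{X}_0(20))$ you describe; conflating the two hides exactly where the rank reduction has to happen. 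As written, your proposal reproduces the approach that is known not to work and defers the genuine difficulty to a closing remark, so it does not constitute a proof.
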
  This congruence family deceptively resembles the classic congruences for the partition function $p(n)$ discovered by Ramanujan, but presents much greater challenges; a complete proof was not found until 2012.

Our principal discovery is that the congruence family of Theorem \ref{Thm1} is only one specific manifestation of a more general $\ell$-adic behavior on sequences of modular functions on the curve $\mathrm{X}_0(20)$.  In particular, this behavior manifests itself as a \textit{second} congruence family for the Fourier coefficients of a different modular form.

It appears that a remarkable property exists for congruence families associated with modular curves of composite level---in some sense, the \textit{same} family can manifest in the Fourier coefficients associated with \textit{multiple} modular forms.  We show how our approach may be applied to many other recently studied partition functions.

We also discuss some of the implications of this approach to the localization method of congruence families, and some peculiar properties that the functions associated with Theorem \ref{Thm1} have with respect to Ramanujan's phi function.

\subsection{Background}

What makes Theorem \ref{Thm1} so interesting is that, while it superficially resembles the classical congruence families of the unrestricted partition function $p(n)$ modulo powers of 5, 7, 11 that Ramanujan first discovered \cite{Ramanujan}, it is a much more difficult result to prove.  Indeed, a proof was not found until Paule and Radu's work in 2012 \cite{Paule}, 18 years after its proposal, and some 93 years after Ramanujan founded the subject.

This family poses problems for us still.  Like that of other congruence families, the proof given by Paule and Radu involves the $\ell$-adic properties of an associated sequence of meromorphic functions on a classical modular curve.  However, the proof that Paule and Radu provide invokes a remarkable algebraic machinery that is not a priori obvious or natural in the framework of the corresponding modular curve.

The third author has invented some more general techniques, embodied in the localization method, in an attempt to give a unified framework to the subject of partition congruences.  These techniques revolve around manipulation of certain rational polynomial sequences of modular functions to extract congruence properties.  This approach works in many different cases (e.g., \cite{Smoot}, \cite{Smoot0}, \cite{Baner7}), and indeed it reduces to standard methods for classical congruence families; however, this approach fails to resolve the  congruences of Theorem \ref{Thm1}.  As such, we cannot at present extend classical methods via a systematic approach to congruence families of a certain class which encompass Theorem \ref{Thm1} and other contemporary congruence families, some of which are yet standing conjectures.  We know that the family has been proved, but we still do not fully understand \textit{why} it is there, and why more systematic approaches fail to resolve it.

In this work we report on recent progress in studying the underlying difficulties of this family.  To our surprise, we have discovered that Theorem \ref{Thm1} is but part of a natural and extremely closely related \textit{pair} of congruence families, exhibited by a pair of Frobenius partition functions.  It may be that one reason that the congruences for $c\phi_2$ have proved so difficult to understand is that in the decade since they were resolved (and the three decades since they were first proposed), they have been effectively \textit{half} perceived.

This discovery arose after we became aware of a study of generalized Frobenius partition functions by Jiang, Rolen, and Woodbury \cite{Jiang}.  They define a class of functions of the form $c\psi_{k,\beta}(n)$ for which 
\begin{align}
c\psi_{2,1}(n) &= c\phi_2(n)
\end{align} for all $n\ge 0$.  Given the precise definition of this class of functions, as well as the restrictions on $k$ and $\beta$ (see below), it is natural to consider the function $c\psi_{2,0}(n)$.  Indeed, this function had been studied previously by Drake \cite{Drake}, who used the notation $c\psi_2(n)$.  We discovered that $c\psi_{2}(n)$ exhibits the following congruence family:

\begin{theorem}\label{Thm12}
Let $n,\alpha\in\mathbb{Z}_{\ge 1}$ such that $6n\equiv -1\pmod{5^{\alpha}}$.  Then $c\psi_{2}(n)\equiv 0\pmod{5^{\alpha}}$.
\end{theorem}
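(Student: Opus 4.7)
The plan is to emulate the Paule--Radu framework used for Theorem \ref{Thm1} and then transfer the congruence via an explicit correspondence between modular function rings on $\mathrm{X}_0(20)$. Let $A(q) = \sum_{n \ge 0} c\psi_{2}(n)\, q^n$, which Drake recorded as an eta-quotient of level dividing $20$. I would first fix, for each $\alpha \ge 1$, the residue $r_\alpha \in \mathbb{Z}/5^\alpha\mathbb{Z}$ satisfying $6 r_\alpha \equiv -1 \pmod{5^\alpha}$, and then, by the standard $U_5$-dissection machinery applied to $A(q)$, form a sequence
\[
\mathcal{L}_\alpha(q) \;:=\; \Phi_\alpha(q) \cdot \sum_{n \ge 0} c\psi_{2}\!\left(5^\alpha n + r_\alpha\right) q^n,
\]
where $\Phi_\alpha(q)$ is an eta-quotient chosen so that $\mathcal{L}_\alpha$ is a modular function on $\mathrm{X}_0(20)$ with controlled pole divisor. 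The choice of $\Phi_\alpha$ is dictated by the requirement that $\mathcal{L}_\alpha$ lie in the localized $\mathbb{Z}[t]$-module (for a suitable Hauptmodul $t$ of $\mathrm{X}_0(20)$) analogous to the one employed by Paule and Radu for the $c\phi_2$ sequence.

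Next, I would construct an explicit $\mathbb{Z}$-algebra morphism $\Psi$ from the ring of modular functions underlying the Paule--Radu sequence for $c\phi_2$ to the analogous ring for $c\psi_2$. The natural candidate is the map induced by an Atkin--Lehner involution on $\mathrm{X}_0(20)$ (most likely $w_5$, or a composition involving $w_4$), possibly twisted by multiplication by a unit eta-quotient. The crucial verification is that $\Psi$ sends the Paule--Radu sequence $L_\alpha$ to $\mathcal{L}_\alpha$ up to a factor that is a $5$-adic unit; this in turn reduces to matching the two arithmetic progressions $12n \equiv 1$ and $6n \equiv -1 \pmod{5^\alpha}$ through the cusp behavior of $\Psi$. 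Once established, Theorem \ref{Thm1} immediately forces $\mathcal{L}_\alpha \equiv 0 \pmod{5^\alpha}$, whence Theorem \ref{Thm12}.

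The main obstacle is the second step: pinning down $\Psi$ precisely and verifying that it intertwines the two sequences in the required sense. This demands a careful cusp-by-cusp analysis on $\mathrm{X}_0(20)$, tracking how the eta-quotient multipliers $\Phi_\alpha$ transform under the Atkin--Lehner action and confirming that any residual discrepancy is an invertible power series with constant term $1$, so that the congruence truly descends. A secondary difficulty lies in handling the base case $\alpha = 1$: even granting the isomorphism, one must independently verify that $\mathcal{L}_1$ genuinely corresponds to $L_1$ (and not to a shifted or twisted cousin) under $\Psi$, because the initialization of the induction is what anchors the entire $\ell$-adic tower. I would attempt the direct transfer first; should the correspondence be only partial, the fallback is to rerun the Paule--Radu localization induction in parallel for $\mathcal{L}_\alpha$, using $\Psi$ only to import the modular relations among the relevant Hauptmoduln rather than the congruences themselves.
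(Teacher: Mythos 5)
Your overall strategy is the paper's: set up a Paule--Radu-style sequence of modular functions on $\mathrm{X}_0(20)$ enumerating the cases $6n\equiv -1\pmod{5^\alpha}$ for $c\psi_2$, and transfer the divisibility $5^\alpha\mid L_\alpha^{(1)}$ of Theorem \ref{Thm1} through an explicit correspondence of modular-function rings. The gap sits precisely in the step you flag as the main obstacle but leave as a guess. The correspondence is \emph{not} induced by an Atkin--Lehner involution alone, nor by one twisted by a unit eta-quotient multiplier; the paper records that no direct Atkin--Lehner connection could be found. What works is the conjugate of the involution $W=\left(\begin{smallmatrix}4&-1\\100&-24\end{smallmatrix}\right)$ by the half-integer matrix $\left(\begin{smallmatrix}1&1/2\\0&1\end{smallmatrix}\right)$, i.e.\ by the substitution $q\mapsto -q$ applied before and after $W$; the composite collapses to the action of $\gamma=\left(\begin{smallmatrix}27&7\\50&13\end{smallmatrix}\right)\in\Gamma_0(10)$. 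Because $\gamma$ fixes $t$ and $x$ and sends $y\mapsto -\tfrac14(1+6x+4y)$, it carries the module generators $p_a^{(1)}$ to $p_a^{(0)}$ while leaving every polynomial coefficient in $t$ untouched, which is exactly the intertwining your $\Psi$ must achieve. Without identifying this specific conjugated map, the plan cannot be executed: a bare $w_5$ or $w_4$ does not send $\mathcal{A}^{(1)}$ to $\mathcal{A}^{(0)}$.

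Two further points you would need to supply. First, your induction requires the transfer map to commute with $U_5$; for a general element of $\Gamma_0(10)$ this fails, and the paper justifies it only because $\gamma$ factors as the stated conjugation, each factor of which does commute with $U_5$. Second, your concern about the base case is well placed but resolves cleanly with the correct $\gamma$: one has $L_1^{(0)}=U_5\left(\mathcal{A}^{(0)}\right)=U_5\left(\mathcal{A}^{(1)}(\gamma\tau)\right)=L_1^{(1)}(\gamma\tau)$ exactly, with no shift, twist, or residual unit power series to normalize, so no separate cusp-by-cusp analysis is needed there. With $\gamma$ in hand, the remainder of your outline goes through essentially as written and reproduces the paper's argument.
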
  Henceforth we will use the notation developed by Jiang, Rolen, and Woodbury \cite{Jiang}, since it has proved so insightful; in particular, it allows us to express Theorems \ref{Thm1} and \ref{Thm12} together in the following form:

\begin{corollary}\label{Thm13}
Fix $\beta\in\{0,1\}$ and let $n,\alpha\in\mathbb{Z}_{\ge 1}$ such that $3\cdot 2^{\beta+1}n\equiv (-1)^{\beta+1}\pmod{5^{\alpha}}$.  Then
\begin{align*}
c\psi_{2,\beta}(n)\equiv 0\pmod{5^{\alpha}}.
\end{align*}
\end{corollary}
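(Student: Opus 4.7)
The plan is straightforward, since Corollary \ref{Thm13} is not a new result but rather a unified reformulation of Theorems \ref{Thm1} and \ref{Thm12}. I would prove it by splitting into the two admissible values of $\beta$ and verifying that each case reduces exactly to one of the previously stated theorems.

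First I would handle the case $\beta = 1$. Here the identity $c\psi_{2,1}(n) = c\phi_2(n)$, quoted from \cite{Jiang} in the discussion preceding Theorem \ref{Thm12}, identifies the partition function in question with $c\phi_2$. The divisibility hypothesis $3 \cdot 2^{\beta+1} n \equiv (-1)^{\beta+1} \pmod{5^{\alpha}}$ becomes $12 n \equiv 1 \pmod{5^{\alpha}}$, which is precisely the hypothesis of Theorem \ref{Thm1}. Invoking Theorem \ref{Thm1} then yields the desired congruence $c\psi_{2,1}(n) \equiv 0 \pmod{5^{\alpha}}$.

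Next I would handle the case $\beta = 0$. By the definition of the family $c\psi_{k,\beta}$, we have $c\psi_{2,0}(n) = c\psi_{2}(n)$, the function studied by Drake \cite{Drake}. The divisibility hypothesis reduces to $6 n \equiv -1 \pmod{5^{\alpha}}$, which is precisely the hypothesis of Theorem \ref{Thm12}. Applying Theorem \ref{Thm12} gives $c\psi_{2,0}(n) \equiv 0 \pmod{5^{\alpha}}$.

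There is no substantive obstacle here: the only content of the corollary is the observation that the exponents $\beta+1$ and $\beta+1$ appearing in the expressions $3 \cdot 2^{\beta+1}$ and $(-1)^{\beta+1}$ interpolate the two congruence conditions in a uniform way, so that the pair of theorems can be stated as a single assertion indexed by $\beta \in \{0,1\}$. All of the analytic and arithmetic work is already packaged into Theorems \ref{Thm1} and \ref{Thm12}; the corollary simply records that they fit into a common framework, which is the conceptual point the authors wish to emphasize going forward.
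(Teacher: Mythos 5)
Your proof is correct and matches the paper's treatment: Corollary \ref{Thm13} is simply the uniform restatement of Theorems \ref{Thm1} and \ref{Thm12}, obtained by checking that $\beta=1$ gives $12n\equiv 1\pmod{5^{\alpha}}$ with $c\psi_{2,1}=c\phi_2$ and $\beta=0$ gives $6n\equiv -1\pmod{5^{\alpha}}$ with $c\psi_{2,0}=c\psi_2$. The paper likewise records the corollary as following immediately once both theorems are in hand.
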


The latter corollary already suggests a close relationship between the two families.  However, we were surprised to discover just how close this connection is.  In the first place, each family is associated with a sequence of functions defined on the classical modular curve $\mathrm{X}_0(20)$---a curve of genus 1 and cusp count 6.  For each case, one constructs a pair of rank 2 $\mathbb{Z}[t]$-modules (with $t$ a natural Hauptmodul corresponding to $\mathrm{X}_0(5)$), and demonstrates membership of the respective functions within these modules.

For each family, the \textit{generators} of the respective modules are different; however, we have found that the proof of Theorem \ref{Thm12} is---at least in terms of \textit{formal} algebraic manipulations---\textit{identical} to that of Theorem \ref{Thm1}.  Indeed, the families are \textit{equivalent}; they are related to each other via a certain invertible mapping between subrings of the modular functions on $\mathrm{X}_0(20)$ which fixes the modular functions on $\mathrm{X}_0(5)$.  

It appears that in some strange sense Theorems \ref{Thm1} and \ref{Thm12} constitute a ``single" congruence family (i.e., a specific pattern of $5$-adic convergence to 0), which somehow manifests itself in \textit{two different} ways, for two different functions.  This approach provides a new strategy for the future study of congruence families associated with modular curves of composite level.

We discovered a second strange property which emerges when we examine the module generators associated with each respective congruence family.  Apart, each is a function defined on $\mathrm{X}_0(20)$.  When combined, they produce a function defined on $\mathrm{X}_0(10)$, a curve of genus 0 and cusp count 4.  This is especially interesting, in that congruence families associated with such curves, i.e., Riemann surfaces of simpler topologies, are much more accessible to our standardized proof methods.

Finally, these families bear a strange symmetry to one another upon the application of a certain Atkin--Lehner involution to their respective functions.  This symmetry gives rise to an absolutely astonishing and beautiful result which explains a curious coefficient behavior that was discovered by Paule and Radu in their original work \cite[Section 6]{Paule}, and used to assist in the construction of their proof of Theorem \ref{Thm1}.

The remainder of our paper is organized as follows: in Section \ref{sectdef} we give the definitions for the associated Frobenius partitions as well as the generating functions relevant to our work.  In Section \ref{mainthmetc} we give the underlying modular function spaces over $\mathrm{X}_0(20)$ that will be relevant to us.  We prove the initial stages of both congruence families via the associated initial terms on the related function sequences.  We then show how Theorem \ref{Thm12} can be derived from Theorem \ref{Thm1} via a certain mapping on a subring of modular functions of $\mathrm{X}_0(20)$.  We also give an explicit form for this mapping, namely as the action of a specific element of $\Gamma_0(10)$.

In Section \ref{sectaddit} we describe more examples of partition congruences in which a similar approach can be used to show that certain divisibility properties are shared between functions which at first sight appear very different.  The first is a pair of congruences exhibited by the Frobenius partition functions $\phi_2(n)$ and $\psi_2(n)$, studied by Andrews \cite{Andrews} and by Eckland and the second author \cite{Eckland}.  Individual congruences are rarer for these functions, and no infinite families are known to exist.  Our method suggests that any families for one of these functions would necessarily imply an associated family for the other.  Moreover, we show how a witness identity for a congruence for $\phi_2(n)$, which is derived by a comparatively tedious cusp analysis, can be used to derive (in a far less tedious manner) an analogous witness identity for the related congruence for $\psi_2(n)$.

The second example involves a pair of altogether different functions sudied by Connor Morrow in collaboration with the first author \cite{GarvanM}.  One is a congruence family exhibited by a function related to the crank parity function originally proved by Choi, Kang, and Lovejoy in \cite{Choi}.  The other is a family exhibited by a function closely related to the 1-shell TSPP and $\overline{\mathcal{EO}}(n)$ functions studied in \cite{Andrews2}, \cite{Chern2}, and \cite{Hirschhorn}.  Moreover, we note that yet more recent work in this subject has been done by the first author in concert with Dandan Chen and Rong Chen \cite{Chen2}.

Still another example involves a pair of infinite \textit{internal} congruence families exhibited by coefficients related to quotients of Ramanujan's theta functions, studied by Chern and Tang \cite{Chern}.

Note the variety of different functions in our examples.  We have Frobenius partitions, of course; but we also have objects related to mock theta functions, the partition crank statistic, and Ramanujan's classic theta functions.  Other potential examples that warrant investigation include smallest parts functions (especially those related to mock theta functions), $k$-elongated plane partitions, and the Hecke trace functions $t_m(d)$.

The key unifying property of all of these examples is that the modular curve associated with the congruence families has composite level.  The latter property also appears to be critical in studying the most difficult problems of the subject.

As such, a proper understanding of ``equivalent congruences" associated with composite level modular curves appears to be an extremely important phenomenon, though its universality has not hitherto been fully grasped.  We hope that we have effectively conveyed the significance of this approach to future work.

In Section \ref{sectreduc} we show how the two function sequences corresponding to Theorems \ref{Thm1} and \ref{Thm12}, when combined, produce functions which reduce to the simpler curve $\mathrm{X}_0(10)$.  This has implications for the more systematic programmes (e.g. localization) which have hitherto failed to account for congruence families like those of Theorems \ref{Thm1} and \ref{Thm12}.  We give conjectures as to the accessibility of proving the two congruence families \textit{relative to one another} via localization.

Finally, in Section \ref{sectinvol} we show how a peculiar coefficient behavior discovered by Paule and Radu is the consequence of a very striking relationship with Ramanujan's $\varphi$ function (i.e., Jacobi's classic theta function $\vartheta_3(0|2\tau)$).  In Section \ref{sectfin} we summarize and present the questions and conjectures which we consider especially important for further work.

In this work we presume an understanding of the theory of modular functions and its associated modular curves.  See \cite[Section 2]{Smoot}.  A useful classical reference is \cite{Knopp}, and a more modern reference is \cite[Chapters 2-3]{Diamond}.

We will include with this paper online access to a Mathematica notebook which will supplement the results of this paper, many of which were justified with computations that cannot be included in this article without substantially extending its size.  This notebook can be found at \href{https://www.d.umn.edu/~jsellers/garvansellerssmoot2024.pdf}{https://www.d.umn.edu/$\sim$jsellers/garvansellerssmoot2024.pdf}.

\section{Frobenius Partition Functions}\label{sectdef}

As noted above, the general study of Frobenius partition functions was initiated by Andrews in 1984 \cite{Andrews}.  This is now a broad area of study, and we will concentrate on a recent synthesis by Jiang, Rolen, and Woodbury \cite{Jiang} under which the binary nature of the Andrews--Sellers congruences can be recognized.

\begin{definition}{(Jiang--Rolen--Woodbury)}\label{jrwd}
Let $n,k\in\mathbb{Z}_{\ge 1}$ and $\beta\in\mathbb{Z}+\frac{k}{2}$ nonnegative.  A $(k,\beta)$\textit{-colored generalized Frobenius partition} of $n$ is an array of the form
\begin{align*}
\begin{pmatrix}
  a_1 & a_2 & ... & a_r \\
  b_1 & b_2 & ... & b_s 
 \end{pmatrix}
\end{align*} satisfying the following:
\begin{itemize}
\item Each $a_i,b_j$ belongs to one of $k$ copies of $\mathbb{Z}_{\ge 0}$,
\item Each row is decreasing with respect to lexicographic ordering,
\item $r+s\neq 0$
\item $r-s = \beta-\frac{k}{2}$
\item $n = r+\sum_{0\le i\le r}a_i + \sum_{0\le j\le s}b_j.$
\end{itemize}  The number of such arrays is denoted $c\psi_{k,\beta}(n)$, and the associated generating function over the variable $q$ is denoted $\mathrm{C}\Psi_{k,\beta}(q)$.
\end{definition}

By and large, the generating functions for $\mathrm{C}\Psi_{k,\beta}(q)$ are not \textit{a priori} obvious; indeed, a significant portion of \cite{Jiang} is devoted specifically to the problem of constructing such generating functions.  In the cases $k=2$, $\beta=0,1$, the generating functions are simple to state \cite{Andrews,Drake,Jiang}:
\begin{align*}
\mathrm{C}\Psi_{2,1}(q) := \sum_{n=0}^{\infty}c\phi_2(n)q^n &= \sum_{n=0}^{\infty}c\psi_{2,1}(n)q^n = \frac{(q^2;q^2)_{\infty}^5}{(q;q)_{\infty}^4(q^4;q^4)_{\infty}^2},\\
\mathrm{C}\Psi_{2,0}(q) := \sum_{n=0}^{\infty}c\psi_2(n)q^n &= \sum_{n=0}^{\infty}c\psi_{2,0}(n)q^n = \frac{(q^4;q^4)_{\infty}^2}{(q;q)_{\infty}^2(q^2;q^2)_{\infty}}.
\end{align*}  Here we use the standard $q$-Pochhammer symbol
\begin{align*}
(q^a;q^b)_{\infty} := \prod_{m=0}^{\infty}\left( 1 - q^{bm+a} \right).
\end{align*}  The close relationship between these two Frobenius functions is not clear from the generating functions themselves.  On the other hand, Definition \ref{jrwd} reveals that the two are combinatorically close.  The key difference is that for $c\psi_{2,1}(n)$ we must count arrays in which $r=s$; for $c\psi_{2,0}(n)$, we must have $s=r+1$.

\section{Proof of Theorem \ref{Thm12}}\label{mainthmetc}

A common approach to proving a congruence family corresponding to the coefficients of a given modular form is to construct a sequence $\mathcal{L}:=(L_{\alpha})_{\alpha\ge 1}$ of functions on an associated manifold---which for our purposes will always be the classical modular curve $\mathrm{X}_0(N)$ for some positive integer $N$.  Indeed, except for Section \ref{sectaddit}, we will almost always work with $\mathrm{X}_0(20)$.  Each $L_{\alpha}$ enumerates a given case of the congruence family, and each function is related to its predecessor via a certain linear operator.  The difficulty then is to express each $L_{\alpha}$ in terms of some convenient reference functions.

We will define a pair of function sequences,

\begin{align}
\mathcal{L}^{(\beta)} := \left( L_{\alpha}^{(\beta)} \right)_{\alpha\ge 1}
\end{align} with $\beta\in\{0,1\}$, by

\begin{align}
L_{\alpha}^{(\beta)} := \frac{1}{\mathrm{C}\Psi_{2,\beta}\left(q^{1+4a}\right)}\cdot \sum_{3\cdot 2^{\beta+1}n\equiv (-1)^{\beta+1}\bmod{5^{\alpha}}} c\psi_{2,\beta}(n) q^{\left\lfloor n/5^{\alpha}\right\rfloor + \beta},
\end{align} where $a\in\{0,1\}$, $\alpha\equiv a\pmod{2}$.

To relate each $L_{\alpha}^{(\beta)}$ to its successor, we construct a set of linear operators $U^{(\beta)}_{a}$.  In particular, for $\beta=0,1$, we have
\begin{align}
U^{(\beta)}_{1}\left( \sum_{n\ge N}a(n)q^n \right) = \sum_{5n\ge N}a(5n)q^n
\end{align} is the standard $U_5$ operator (see \cite[Chapter 8]{Knopp} for a list of the standard properties of $U_5$), and
\begin{align}
U^{(\beta)}_{0}\left( f \right) = U_{5}\left( \mathcal{A}^{(\beta)}\cdot f \right),
\end{align} where
\begin{align}
\mathcal{A}^{(\beta)} = q^{2(3\beta-2)}\frac{\mathrm{C}\Psi_{2,\beta}(q)}{\mathrm{C}\Psi_{2,\beta}(q^{25})}.
\end{align}  It is a comparatively standard procedure in the theory to show that for all $\alpha\ge 1$, with $a\in\{0,1\}$ matching the parity of $\alpha$, we have
\begin{align}
U^{(\beta)}_{a}\left( L_{\alpha}^{(\beta)} \right) = L_{\alpha+1}^{(\beta)}.
\end{align}  See, for example, \cite[p. 23]{Atkin}, or \cite[Lemma 2.13]{Smoot}.

If we can show that $L_{\alpha}^{(\beta)}$ is divisible by $5^{\alpha}$, then we have proved Theorems \ref{Thm1} and \ref{Thm12}.  If we make the substitution $q=e^{2\pi i\tau}$ with $\tau\in\mathbb{H}$, then these function sequences are modular functions over the congruence subgroup $\Gamma_0(20)$; equivalently, they correspond to functions on the modular curve $\mathrm{X}_0(20)$.

This modular curve is a compact Riemann surface of cusp count 6 and genus 1.  If we were to consider, say, the space of functions on $\mathrm{X}_0(20)$ which live at the cusp $[0]$ (that is, the meromorphic functions which are holomorphic along the entire curve except the cusp $[0]$), and denote this space as $\mathcal{M}^0\left( \mathrm{X}_0(20) \right)$, then the Weierstra\ss\ gap theorem (e.g., \cite{Paule2}) predicts that this space is a rank 2 $\mathbb{C}[x]$-module for some function $x$---that is, there exist two functions, which we will here denote $x$ and $y$, such that 
\begin{align}
\mathcal{M}^0\left( \mathrm{X}_0(20) \right) = \mathbb{Z}[x]+y\mathbb{Z}[x].
\end{align}  In this case, one may show with some ease that a sufficient choice of $x$, $y$ is
\begin{align}
x &= q\frac{(q^2;q^2)_{\infty}(q^{10};q^{10})_{\infty}^3}{(q;q)_{\infty}^3(q^5;q^5)_{\infty}},\\
y &= q^2\frac{(q^2;q^2)_{\infty}^2(q^4;q^4)_{\infty}(q^5;q^5)_{\infty}(q^{20};q^{20})_{\infty}^3}{(q;q)_{\infty}^5(q^{10};q^{10})_{\infty}^2}.
\end{align}  Because the functions $L_{\alpha}^{(\beta)}$ are modular, they are equivalently meromorphic functions on $\mathrm{X}_0(20)$ with possible poles only at the cusps.

One of the features which substantially simplifies the proofs of other congruence families is that when the cusp count is 2, the associated functions $L_{\alpha}$ can be easily made to have positive order at one cusp.  This forces them to have a pole at the other cusp, and consequently to be representable as a polynomial in the generators of the functions which live at that cusp.  This is true, for example, in the cases of Ramanujan's classic congruence families modulo powers of 5, 7, 11 (resp.) for the unrestricted partition function $p(n)$, in which the associated modular curves are $\mathrm{X}_0(\ell)$ with $\ell$ equal to 5, 7, 11 (resp.) \cite{Watson,Atkin}.

Notice that this is not possible in our current case.  The cusp count is 6, and there is no guarantee that $L_{\alpha}^{(\beta)}$ live at the cusp $[0]$ (and indeed, they do not).  One possible resolution is to multiply each $L_{\alpha}^{(\beta)}$ by an eta quotient which lives at $[0]$ and which has positive order high enough to eliminate all of the poles of $L_{\alpha}^{(\beta)}$ except for one at $[0]$.  For us, such a function arises as powers of $1+5x$.  This places $L_{\alpha}^{(\beta)}$ in a localized ring of modular functions.  For example,

\begin{align}
L_{1}^{(1)} = \frac{5}{(1+5x)^2}&\bigg( 4 x + 137 x^2 + 1704 x^3 + 10080 x^4 + 28800 x^5 + 32000 x^6\notag\\ &-y(20 + 400 x + 3040 x^2 + 10240 x^3 + 12800 x^4)\bigg).
\end{align}  We recognize divisibility of the function by 5 in this identity.  Similar identities can easily be computed for larger $\alpha$, and it can be shown that
\begin{align}
L_{\alpha}^{(1)}\in\mathbb{Z}[x]_{\mathcal{S}}+y\mathbb{Z}[x]_{\mathcal{S}},
\end{align} for
\begin{align}
\mathcal{S} := \left\{ (1+5x)^n : n\ge 0 \right\}.
\end{align}  The possibility of actually proving the whole family of congruences by manipulation of these rational polynomials is naturally very alluring.  Such a class of proofs does exist, and has been exhibited for other difficult congruence families in previous papers, e.g., \cite{Smoot}, \cite{Smoot0}, \cite{Baner7}.

This technique fails spectacularly in the cases of the families in Theorems \ref{Thm1} and \ref{Thm12}.  It is certainly true that manipulation of these rational polynomial identities allows one to confirm as many cases of either theorem as we like.  However, it is not currently possible to complete the induction along these lines owing to our current inability to properly stabilize the associated congruence ideal sequence (see \cite[Section 6]{Baner7} for an explicit description of this approach).

Paule and Radu concentrated on a different approach in \cite{Paule}.  They tried to express $L_{\alpha}^{(\beta)}$ almost entirely in terms of functions on a modular curve of lower level.  Let us consider
\begin{align}
t =&\ \frac{\eta(5\tau)^6}{\eta(\tau)^6} = q\left(\frac{(q^5;q^5)_{\infty}}{(q;q)_{\infty}}\right)^6.
\end{align}  This is the Hauptmodul for $\mathrm{X}_0(5)$, a curve of genus 0 and cusp count 2.  Indeed, $t$ is the critical function for proving Ramanujan's congruence family for $p(n)$ modulo powers of 5 (see \cite{Watson} or \cite[Chapters 7-8]{Knopp}).  Being modular over $\Gamma_0(5)$, it is clearly modular over $\Gamma_0(20)$.  It does not live at the cusp $[0]$, but we can perform the same trick we did with $L_1^{(1)}$---we can represent $t$ as a rational polynomial.  Indeed, we have
\begin{align}
t = \frac{x(1+4x)^2}{1+5x}.\label{trep}
\end{align}  Notice that if we multiply through by $1+5x$, substitute $z=1+5x$, and clear fractions, we then have
\begin{align}
(1+5x)t &= x(1+4x)^2\\
zt &= \frac{-1}{125}(1+7z+8z^2-16z^3),\\
-125zt &= 1+7z+8z^2-16z^3.
\end{align}  If we then isolate 1, divide by $z$, and shift back to $x$, we of course have
\begin{align}
1 &= -125tz - 7z - 8z^2 + 16z^3,\\
\frac{1}{z} &= -125t - 7 - 8z + 16z^2,\\
\frac{1}{1+5x} &= 1 - 125 t + 120 x + 400x^2.
\end{align}  Thus through an elementary set of manipulations, we can now represent elements of our localization ring as polynomials in $t$, $x$, $y$.  Indeed, one sees that $x^3$ can be written as a polynomial in $t$, $x$, $x^2$.  As such, we can express each $L_{\alpha}^{(\beta)}$ as a polynomial in $t,x,y$, but with the powers of $x$ restricted to 2 or lower (and the powers of $y$ restricted to 1 or lower).  Thus, our representations will be ``mostly" in terms of the function $t$.

Nevertheless, from these manipulations, we would guess that the associated polynomials would occur as members of a rank 6 $\mathbb{Z}[t]$-module (corresponding to the basis elements 1, $x$, $x^2$, $y$, $yx$, $yx^2$).  The second (and most important) breakthrough of Paule and Radu was to recognize that each $L_{\alpha}^{(1)}$ is expressible in terms of a $\mathbb{Z}[t]$-module of rank 3.  Indeed, each $L_{\alpha}^{(1)}$ can be placed in one of a pair of rank 2 $\mathbb{Z}[t]$-modules depending on the parity of $\alpha$; that is,
\begin{align}
L_{\alpha}^{(1)}\in\mathbb{Z}[t]+p^{(1)}_{a}\mathbb{Z}[t]
\end{align} for a pair of functions $p^{(1)}_{0}$, $p^{(1)}_{1}$.  It is the set of coefficients in this representation which may be closely studied to prove divisibility by ever increasing powers of 5.

For instance,
\begin{align}
L_{1}^{(1)} = -5t+25p^{(1)}_{1}.\label{L11rep}
\end{align}  The question of how to derive this reduced rank representation is a serious one; for the time being we postpone discussion of this to Section \ref{sectinvol}, and instead simply give the functions in terms of $1, x, x^2, y, xy, x^2y$, and powers of $t$:
\begin{align}
p^{(1)}_{0} =&\ 85 t + 625 t^2 - 24 x - 64 x^2 - 4 y - 1000 t y - 12500 t^2 y + 992 x y + 12000 t x y + 3520 x^2 y + 40000 t x^2 y,\label{be1a0}\\
p^{(1)}_{1} =&\ t + 25 t^2 - 8 t x - 4 y - 8 t y - 500 t^2 y - 32 x y + 480 t x y - 64 x^2 y + 1600 t x^2 y.\label{be1a1}
\end{align}

Upon discovering the congruence family of Theorem \ref{Thm12}, we guessed that the associated functions $L_{\alpha}^{(0)}$ could be represented in a similar manner.  We eventually discovered that 
\begin{align}
L_{\alpha}^{(0)}\in\mathbb{Z}[t]+p^{(0)}_{a}\mathbb{Z}[t],
\end{align} with
\begin{align}
p^{(0)}_{0} =&\ 1 + 5 t + 64 x + 600 t x + 208 x^2 + 2000 t x^2 + 4 y + 1000 t y + 12500 t^2 y - 992 x y - 12000 t x y\notag\\ &- 3520 x^2 y - 40000 t x^2 y,\label{be0a0}\\
p^{(0)}_{1} =&\ 1 + 9 t + 8 x + 64 t x + 16 x^2 + 80 t x^2 + 4 y + 8 t y + 500 t^2 y + 32 x y - 480 t x y + 64 x^2 y - 1600 t x^2 y.\label{be0a1}
\end{align}  This seemed reasonable enough; however, comparison of the actual representations of $L_{\alpha}^{(0)}$, $L_{\alpha}^{(1)}$ led us to a much more stunning result.  Let us give $L_{1}^{(0)}$:
\begin{align}
L_{1}^{(0)} = -5t+25p^{(0)}_{1}.\label{L01rep}
\end{align} Comparing (\ref{L11rep}) and (\ref{L01rep}), one finds that the two identities formally match.  Indeed, we have verified that \textit{all} of the formal representations for $L_{\alpha}^{(0)}$ and $L_{\alpha}^{(1)}$ match exactly---they differ only in the associated module generators $p^{(\beta)}_{\alpha}$.

\begin{theorem}\label{fieldfuncfix}
Define 
\begin{align}
\mathcal{R}^{(1)}:&=\mathbb{C}(t)+p^{(1)}_{0}\mathbb{C}(t)\ + p^{(1)}_{1}\mathbb{C}(t)\\
\mathcal{R}^{(0)}:&=\mathbb{C}(t)+p^{(0)}_{0}\mathbb{C}(t)\ + p^{(0)}_{1}\mathbb{C}(t)
\end{align} and the ring mappings
\begin{align}
\sigma^{(\beta)}&: \mathcal{R}^{(\beta)} \longrightarrow\mathcal{R}^{(1-\beta)}\\
&:\left\{\begin{matrix}
& t\longmapsto t,\\
& p^{(\beta)}_{a}\longmapsto p^{(1-\beta)}_{a}.
\end{matrix}\right.
\end{align}  Then
\begin{align}
\sigma^{(\beta)}\left( L^{(\beta)}_{\alpha} \right) = L^{(1-\beta)}_{\alpha}.
\end{align}
\end{theorem}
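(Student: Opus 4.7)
The plan is to realize the formal ring mapping $\sigma^{(\beta)}$ geometrically as the action of a suitably chosen $\gamma\in\Gamma_0(10)\setminus\Gamma_0(20)$ on modular functions on $\mathrm{X}_0(20)$, and then to derive the identity $\sigma^{(\beta)}(L^{(\beta)}_\alpha)=L^{(1-\beta)}_\alpha$ by induction on $\alpha$. Since $[\Gamma_0(10):\Gamma_0(20)]=2$, any such $\gamma$ induces an involution on the function field $\mathcal{M}(\mathrm{X}_0(20))$ which fixes $\mathcal{M}(\mathrm{X}_0(10))$ pointwise; in particular, since $t$ is modular over $\Gamma_0(5)\supset\Gamma_0(10)$, we obtain $t\mid\gamma=t$ automatically.

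The first step is to pin down $\gamma$ (the simplest candidate being $\gamma=\left(\begin{smallmatrix}1 & 0\\ 10 & 1\end{smallmatrix}\right)$) and verify the two critical exchange identities
\begin{align*}
p^{(1)}_{a}\mid\gamma \;=\; p^{(0)}_{a}\quad (a\in\{0,1\}),\qquad \mathcal{A}^{(1)}\mid\gamma \;=\; \mathcal{A}^{(0)}.
\end{align*}
Both are finite checks at the level of $q$-expansions once one invokes the standard transformation law for the Dedekind eta function under $\Gamma_0(10)$ together with the explicit eta-quotient representations (\ref{be1a0})--(\ref{be0a1}) and the definition of $\mathcal{A}^{(\beta)}$. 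Because $\gamma^2\in\Gamma_0(20)$, the action $f\mapsto f\mid\gamma$ is an involution on $\mathcal{M}(\mathrm{X}_0(20))$, and once these identities are established the formal ring mapping $\sigma^{(\beta)}$ of the theorem coincides with the restriction of $|\gamma$ to $\mathcal{R}^{(\beta)}$; this in particular forces $\sigma^{(\beta)}$ to be well-defined and multiplicative.

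The second step is an intertwining identity relating $\gamma$ to the transfer operators. For the ordinary $U_5$, this is a standard coset computation: conjugating the representatives $\left\{\left(\begin{smallmatrix} 1 & j \\ 0 & 5\end{smallmatrix}\right):0\le j\le 4\right\}$ through $\gamma$ produces another valid complete system of representatives for the same double coset (see \cite[Chapter 8]{Knopp}), so $U_5(f)\mid\gamma = U_5(f\mid\gamma)$ as meromorphic functions. Combined with the eta-quotient exchange $\mathcal{A}^{(1)}\mid\gamma=\mathcal{A}^{(0)}$, this immediately upgrades to $U^{(\beta)}_a(f)\mid\gamma = U^{(1-\beta)}_a(f\mid\gamma)$ for every $f$ in the relevant function space and $a\in\{0,1\}$.

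With these two ingredients the induction is immediate. The base case $\alpha=1$ follows from (\ref{L11rep}) and (\ref{L01rep}): $L^{(1)}_1\mid\gamma = -5t+25(p^{(1)}_1\mid\gamma) = -5t+25p^{(0)}_1 = L^{(0)}_1$. For the inductive step, with $a\equiv\alpha\pmod 2$,
\begin{align*}
L^{(\beta)}_{\alpha+1}\mid\gamma \;=\; U^{(\beta)}_{a}\bigl(L^{(\beta)}_{\alpha}\bigr)\mid\gamma \;=\; U^{(1-\beta)}_{a}\bigl(L^{(\beta)}_{\alpha}\mid\gamma\bigr) \;=\; U^{(1-\beta)}_{a}\bigl(L^{(1-\beta)}_{\alpha}\bigr) \;=\; L^{(1-\beta)}_{\alpha+1}.
\end{align*}
I expect the main obstacle to lie in the second step: identifying a $\gamma$ for which the exchange identities for the $p^{(\beta)}_a$ and for $\mathcal{A}^{(\beta)}$ hold simultaneously, and verifying them. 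These are concrete but somewhat delicate computations with eta-quotients whose transformation behavior under $\Gamma_0(10)$ must be tracked with care (and which essentially amount to cusp-by-cusp order matching). Once these two exchange identities are in hand, the intertwining property for the $U$-operators and the inductive closure are purely formal.
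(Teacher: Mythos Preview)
Your overall strategy is exactly the paper's: realize $\sigma^{(\beta)}$ as $f\mapsto f\mid\gamma$ for a suitable $\gamma\in\Gamma_0(10)\setminus\Gamma_0(20)$, check the exchange identities for $p^{(\beta)}_a$ and $\mathcal{A}^{(\beta)}$, verify that $U_5$ intertwines with $\mid\gamma$, and induct.  The structure of your induction is also identical to the paper's.

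There is, however, a real gap in your second step, and it is tied to your choice of candidate.  The function $\mathcal{A}^{(\beta)}$ is \emph{not} modular on $\Gamma_0(20)$: it involves $\mathrm{C}\Psi_{2,\beta}(q^{25})$ and lives at level $100$.  Consequently two things go wrong with $\gamma=\left(\begin{smallmatrix}1&0\\10&1\end{smallmatrix}\right)$.  First, the exchange $\mathcal{A}^{(1)}\mid\gamma=\mathcal{A}^{(0)}$ depends on the coset of $\gamma$ in $\Gamma_0(10)/\Gamma_0(100)$ (which has index $10$, not $2$), so there is no reason for the ``simplest candidate'' to satisfy it even though it automatically swaps the level-$20$ functions $p^{(\beta)}_a$.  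Second, and more seriously, your ``standard coset computation'' for $U_5(f)\mid\gamma=U_5(f\mid\gamma)$ requires the conjugated representatives $T_j\gamma(\gamma T_{j'})^{-1}$ to land in the invariance group of $f$, i.e.\ in $\Gamma_0(100)$.  A direct check shows that for $\gamma=\left(\begin{smallmatrix}1&0\\10&1\end{smallmatrix}\right)$ the lower-left entry of this matrix is $10(10j'+4)$, never divisible by $100$; so the intertwining fails at level $100$ for this $\gamma$.  The paper explicitly flags this point (``Ordinarily, we could not justify \dots'').

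The paper's resolution is not to search among generic elements of $\Gamma_0(10)$ but to \emph{construct} $\gamma$ with the needed properties: one conjugates the Atkin--Lehner involution $W=\left(\begin{smallmatrix}4&-1\\100&-24\end{smallmatrix}\right)$ by the sign-change matrix $\left(\begin{smallmatrix}1&1/2\\0&1\end{smallmatrix}\right)$ (i.e.\ $q\mapsto -q$), obtaining $\gamma=\left(\begin{smallmatrix}27&7\\50&13\end{smallmatrix}\right)\in\Gamma_0(10)$.  Because $W$ is Atkin--Lehner at level $100$ for the prime $2$ and $q\mapsto -q$ is harmless for $U_5$ (as $5$ is odd), both factors commute with $U_5$, which is what rescues the intertwining step.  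The same decomposition makes the verification of $\mathcal{A}^{(1)}\mid\gamma=\mathcal{A}^{(0)}$ transparent via the eta transformation law.  So your plan is the right one, but the crucial content you are missing is the Atkin--Lehner\,/\,sign-change factorization that singles out a workable $\gamma$ and simultaneously certifies the $U_5$-commutation at level $100$.
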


\subsection{Proof of Theorem \ref{fieldfuncfix}}\label{proofAtkin1}

One straightforward, if tedious, proof of Theorem \ref{fieldfuncfix} involves directly computing and comparing $U^{(\beta)}_a \left(t^m\right)$, $U^{(\beta)}_a \left(t^m p^{(0)}_{a}\right)$ for $\beta=0,1$, $a=0,1$, and $m$ ranging over 5 consecutive values.  One can then invoke a modular equation for $t$, e.g., the equation used by Paule and Radu in \cite{Paule}.

However, the existence of such a ring mapping system as $\sigma^{(\beta)}$ suggests that there exists some explicit means of transforming the functions corresponding to $c\psi_{2,0}(n)$ into those corresponding to $c\psi_{2,1}(n)$.  Our first instinct on the matter was to attempt a connection between $\mathrm{C}\Psi_{2,\beta}$, $\mathrm{C}\Psi_{2,1-\beta}$ via an Atkin--Lehner involution.

At first no direct connection could be found.  On closer examination, the first author realized that we can indeed create an explicit mapping between these generating functions by first mapping $q\mapsto -q$, then applying an Atkin--Lehner involution, and finally mapping $q\mapsto -q$ a second time.  Equivalently, we conjugate a certain Atkin--Lehner involution by a half-integer valued matrix which equates to sending $q\mapsto -q$.

More specifically, we consider the operator
\begin{align*}
W := \begin{pmatrix}
4 & -1\\
100 & -24
\end{pmatrix}.
\end{align*} Let us start by taking the functions $\mathcal{A}^{(\beta)}$ which are necessary for construction of our $U^{(\beta)}_{\alpha}$ operators.  If we take, say, $\beta=1$, and we first map $q\mapsto -q$, then we have
\begin{align}
\mathcal{A}^{(1)}\longmapsto \frac{\eta(\tau)^4 \eta(4\tau)^2 \eta(50\tau)^7}{\eta(2\tau)^7 \eta(25\tau)^4 \eta(100\tau)^2}.
\end{align}  We want to send $\tau$ to $W\tau$.  See Section \ref{sectinvol} below for a detailed treatment of how $W$ affects each eta factor, and indeed how it affects our relevant spaces of modular functions.  This gives us
\begin{align}
\frac{\eta(\tau)^4 \eta(4\tau)^2 \eta(50\tau)^7}{\eta(2\tau)^7 \eta(25\tau)^4 \eta(100\tau)^2} \longmapsto \frac{\eta(\tau)^2 \eta(4\tau)^4 \eta(50\tau)^7}{\eta(2\tau)^7 \eta(25\tau)^2 \eta(100\tau)^4}.
\end{align}  Finally, applying $q\mapsto -q$, we achieve
\begin{align}
\frac{\eta(\tau)^2 \eta(4\tau)^4 \eta(50\tau)^7}{\eta(2\tau)^7 \eta(25\tau)^2 \eta(100\tau)^4}\longmapsto \mathcal{A}^{(0)}.
\end{align}  We now have a direct means of connecting the two.

If we examine the composition of these mappings, we find that the matrix representing the overall action is
\begin{align}
\begin{pmatrix}
1 & 1/2\\
0 & 1
\end{pmatrix}W\begin{pmatrix}
1 & 1/2\\
0 & 1
\end{pmatrix} = \begin{pmatrix}
54 & 14\\
100 & 26
\end{pmatrix},\label{swsA}
\end{align} which will reduce via the associated transformation to
\begin{align*}
\gamma =\begin{pmatrix}
27 & 7\\
50 & 13
\end{pmatrix}\in\Gamma_0(10).
\end{align*}  Notice that by this transformation, we have 
\begin{align*}
t\left(\gamma\tau\right) = t(\tau),\\
x\left(\gamma\tau\right) = x(\tau),
\end{align*} as the former is modular over $\Gamma_0(5)$, and the latter is modular over $\Gamma_0(10)$.  We need only to understand how $\gamma$ affects $y$.  Applying the transformation and accounting for the Nebentypus factor, we can show that
\begin{align*}
y\left(\gamma\tau\right) = -\frac{1}{4}(1+6x+4y).
\end{align*}  Knowing that $t$ and $x$ are invariant, we can make this substitution for $y$ into $p_1^{(1)}$, one gets
\begin{align*}
p^{(1)}_{1}\left(\gamma\tau\right) =&\ t + 25 t^2 - 8 t x - y\left(\gamma\tau\right)\left(  4 + 8 t + 500 t^2 + 32 x  - 480 t x  + 64 x^2  - 1600 t x^2  \right)\\
=&\ 1 + 9 t + 8 x + 64 t x + 16 x^2 + 80 t x^2 + 4 y + 8 t y + 500 t^2 y + 32 x y - 480 t x y + 64 x^2 y - 1600 t x^2 y\\ 
=&\ p^{(0)}_{1}.
\end{align*}  Similarly, we have
\begin{align*}
p^{(1)}_{0}\left(\gamma\tau\right) =&\ p^{(0)}_{0}.
\end{align*}  Thus $\sigma^{(1)}$ sends $f(\tau)$ to $f(\gamma\tau)$, and we can define
\begin{align*}
\sigma^{(1)}\left(f\right) = f\left(\gamma\tau\right).
\end{align*}  We have
\begin{align}
L_1^{(0)} &= U_5\left( \mathcal{A}^{(0)} \right) = U_5\left( \mathcal{A}^{(1)}(\gamma\tau) \right) = L_1^{(1)}(\gamma\tau) = \sigma^{(1)}\left(L_{1}^{(1)}\right).
\end{align}
Supposing that for some $\alpha\ge 2$ we have $L_{\alpha-1}^{(0)} = L_{\alpha-1}^{(1)}(\gamma\tau)$.  Again allowing $\alpha \equiv a \pmod{2}$ for $a\in \{0,1\}$,
\begin{align}
L_{\alpha}^{(0)} &= U_{1-a}^{(0)}\left( L_{\alpha-1}^{(0)} \right)\\
&= U_{5}\left( \left(\mathcal{A}^{(0)}\right)^{a} L_{\alpha-1}^{(0)} \right)\\
&= U_{5}\left( \left(\mathcal{A}^{(1)}(\gamma\tau)\right)^{a} L_{\alpha-1}^{(1)}(\gamma\tau) \right)\label{switchA}\\
&= U_{5}\left( \left(\mathcal{A}^{(1)}\right)^{a} L_{\alpha-1}^{(1)} \right)(\gamma\tau)\label{switchB}\\
&= L_{\alpha}^{(1)}(\gamma\tau)\\
&= \sigma^{(1)}\left(L_{\alpha}^{(1)}\right).
\end{align}  Ordinarily, we could not justify (\ref{switchA})-(\ref{switchB}).  However, in our case the action of $\gamma$ is equivalent to the composition of transformations in (\ref{swsA}), and the factors $W$, $\left(\begin{smallmatrix}
1 & 1/2\\
0 & 1
\end{smallmatrix}\right)$ commute with the $U_5$ operator.

Thus induction on $\alpha$ gives us
\begin{align*}
\sigma^{(1)}\left(L_{\alpha}^{(1)}\right) = L_{\alpha}^{(1)}\left(\gamma\tau\right) = L_{\alpha}^{(0)}
\end{align*} for all $\alpha\ge 1$, whereupon we have Theorem \ref{fieldfuncfix}.

Of course, Theorem \ref{Thm1} was proved by showing that $L^{(1)}_{\alpha}$ is divisible by $5^{\alpha}$.  Therefore, $\sigma^{(1)}\left( L^{(1)}_{\alpha} \right) = L^{(0)}_{\alpha}$ will also be so divisible.  With that we immediately have Theorem \ref{Thm12} and Corollary \ref{Thm13}.

Notice that we can similarly show that
\begin{align*}
\sigma^{(0)}\left(L_{\alpha}^{(0)}\right) = L_{\alpha}^{(0)}\left(\gamma^{-1}\tau\right) = L_{\alpha}^{(1)},
\end{align*} with 
\begin{align*}
\gamma =\begin{pmatrix}
13 & -7\\
-50 & 27
\end{pmatrix}\in\Gamma_0(10).
\end{align*}  Thus, Theorems \ref{Thm1} and \ref{Thm12} are equivalent.

\section{Analogous Examples in the Literature}\label{sectaddit}

Notice that $\sigma^{(\beta)}$ is a mapping on the ring of modular functions over $\Gamma_0(20)$ which fixes the functions over $\Gamma_0(5)$ (since $t$ is a Hauptmodul for functions over the latter).  Thus, the two congruence families are closely related to functions on modular function fields.  This very likely poses significant implications for the question of how congruence families relate through mappings of modular functions on various congruence subgroups.  The fact that the explicit nature of the mapping is simply a transformation within the intermediate group $\Gamma_0(10)$ suggests that such relationships may be quite commonplace with congruence families, particularly when the associated modular curve has composite level.

We have come to recognize that the treatment given above may be applied in an analogous manner to a large number of other recent results in the literature.  Below we give three different examples.  One involves partition functions ($\phi_2(n)$ and $\psi_2(n)$) in which infinite congruence families do not appear to exist, but for which individual congruences have been found.  Another involves congruence families related to the crank parity function and two other partition functions.  A third relates to a pair of infinite internal congruence families for functions closely related to Ramanujan's theta functions.

The most important common property of all of these cases is that the level of the corresponding modular curve is always composite.  The sheer variety of different problems for which our approach is applicable is especially striking.

\subsection{Congruences Related to Other Frobenius Partitions}\label{subsecotherfrob}

In the first place, we have discovered a similar relationship appears to apply to other Frobenius partitions.  Consider the functions $\phi_2(n)$ and $\psi_2(n)$ which, for want of space, we will merely define in terms of their generating functions \cite{Andrews, Drake}:
\begin{align*}
\Phi_{2}(q) := \sum_{n=0}^{\infty}\phi_2(n)q^n &= \frac{(q^4;q^4)_{\infty} (q^{6};q^{6})_{\infty}^2}{(q;q)_{\infty}(q^2;q^2)_{\infty}(q^3;q^3)_{\infty}(q^{12};q^{12})_{\infty}},
\end{align*}
\begin{align*}
\Psi_{2}(q) := \sum_{n=0}^{\infty}\psi_2(n)q^n &= \frac{(q^2;q^2)_{\infty}^2 (q^{12};q^{12})_{\infty}}{(q;q)_{\infty}^2(q^4;q^4)_{\infty}(q^6;q^6)_{\infty}}.
\end{align*}  Given that our main result interrelates the divisibility properties of $c\phi_2(n)$ with those of $c\psi_2(n)$, it is very tempting to look for a similar relationship between $\phi_2(n)$ and $\psi_2(n)$.  Neither of these functions appear to exhibit infinite families of congruences.  However, Andrews did prove \cite[Corollary 10.1]{Andrews} the following:
\begin{theorem}\label{thmphi2}
For all $n\ge 0$, $\phi_2(5n+3)\equiv 0\pmod{5}$.
\end{theorem}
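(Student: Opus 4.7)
The plan is to follow the modular-function framework of Section \ref{mainthmetc} and reduce Theorem \ref{thmphi2} to a finite, algorithmic check on $\mathrm{X}_0(12)$. First I would form
$$L(\tau) := U_5\!\left( q^{-3}\,\Phi_2(q) \right) \;=\; \sum_{n \ge 0} \phi_2(5n+3)\, q^n,$$
where $q = e^{2\pi i\tau}$, and multiply $L$ by an appropriate eta-quotient correction factor, chosen in analogy with the $\mathcal{A}^{(\beta)}$ of Section \ref{mainthmetc}, so that the product is a modular function on $\Gamma_0(12)$. Since $\Phi_2(q)$ is built from eta components at levels dividing $12$, this is the natural ambient curve, and divisibility by $5$ of the Fourier coefficients of $L$ is equivalent to the asserted congruence.

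Next I would carry out a cusp analysis on $\mathrm{X}_0(12)$: using the standard order-at-cusp formulas for eta quotients, compute the order of the modular-function form of $L$ at each of the six cusps. I would then pick an auxiliary eta quotient $\pi$ living at the cusp $[\infty]$ and vanishing to sufficient order at every other cusp, so that $\pi \cdot L$ is holomorphic off $[\infty]$. Since $\mathrm{X}_0(12)$ has genus $0$, the space of modular functions holomorphic off $[\infty]$ is $\mathbb{C}[u]$ for a chosen Hauptmodul $u$, and hence $\pi \cdot L$ must be a polynomial in $u$ whose degree is bounded by the pole order of $\pi \cdot L$ at $[\infty]$.

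This degree bound gives a fully explicit Sturm-type precision $N_0$. I would expand $L$ to order $q^{N_0}$, solve the resulting linear system for the coefficients of the polynomial in $u$, and verify that each coefficient is divisible by $5$; this suffices to conclude $\phi_2(5n+3) \equiv 0 \pmod{5}$ for every $n \ge 0$. The main obstacle is the cusp analysis itself, since $\mathrm{X}_0(12)$ has six cusps whose contributions to the eta-quotient orders must be tracked with care; this is the ``comparatively tedious cusp analysis'' flagged in the introduction. Once that data is in hand, the remainder of the argument is mechanical and requires only a finite computation. (One should also expect the final witness identity to furnish, through the map-transport technique developed in Section \ref{mainthmetc}, an analogous identity for $\psi_2(n)$ at far lower cost — the point the authors signal when introducing this subsection.)
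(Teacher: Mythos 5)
Your general framework---normalize $\sum_{n\ge 0}\phi_2(5n+3)q^n$ into a modular function, push its poles to a single cusp, and reduce the congruence to a finite coefficient check---is the right shape of argument, and it is what the paper's witness identity (\ref{phi2id}) accomplishes. But you have placed the function on the wrong curve, and this breaks your decisive step. To make the $U_5$ image modular you must, as you note, first introduce a correction factor; the natural one is $\mathcal{B}^{(1)} = q^2\,\Phi_2(q)/\Phi_2(q^{25})$, which is modular on $\Gamma_0(300)$, and $U_5$ carries it to $L^{(1)} = \frac{1}{\Phi_2(q^5)}\sum_{n\ge 0}\phi_2(5n+3)q^{n+1}$, a function on $\mathrm{X}_0(60)$---not $\mathrm{X}_0(12)$. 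Since $5\nmid 12$, no level-$12$ normalization can avoid this: extracting an arithmetic progression modulo $5$ from a level-$12$ form necessarily introduces level-$5$ structure, and the prefactor $\Phi_2(q^5)$ already involves $\eta(60\tau)$. The curve $\mathrm{X}_0(60)$ has $12$ cusps and genus $7$, so your key claim---that the functions holomorphic away from $[\infty]$ form a polynomial ring $\mathbb{C}[u]$ in a Hauptmodul, so that a pole-order bound yields a finite linear ansatz in powers of $u$---is false for the curve that actually carries $L^{(1)}$. There is no Hauptmodul, and the genus-$0$ shortcut is simply unavailable.

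What the paper does instead is exhibit the explicit identity $L^{(1)} = 5t\,(-1+5P_1^{(1)}+60P_2^{(1)}-3P_3^{(1)})$, where $t$ is the level-$5$ Hauptmodul and the $P_i^{(1)}$ are particular eta quotients on $\Gamma_0(60)$; the identity is verified by computing orders at all twelve cusps together with a Sturm-type expansion check (this is the ``tedious cusp analysis'' relegated to the Mathematica supplement), and the overall factor of $5$ on the right-hand side gives the theorem. (The congruence itself is older---it is Andrews' Corollary 10.1---but the witness identity is the paper's route and is what your plan would need to reproduce.) To repair your argument you would need either to work with a genuine module basis for the functions on $\mathrm{X}_0(60)$ with poles at a prescribed cusp, in the spirit of the Weierstra\ss\ gap theorem and Radu's algorithm, or to produce the eta-quotient combination on the right-hand side directly. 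Your closing remark, that the resulting identity transports to $\psi_2$ via the conjugated Atkin--Lehner map at essentially no extra cost, is correct and matches Section \ref{sectaddit}.
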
  More recently, Eckland and the second author proved \cite{Eckland} a similar result for $\psi_2(n)$:
\begin{theorem}\label{thmpsi2}
For all $n\ge 0$, $\psi_2(5n+4)\equiv 0\pmod{5}$.
\end{theorem}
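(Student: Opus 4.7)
The plan is to deduce Theorem \ref{thmpsi2} directly from Andrews' Theorem \ref{thmphi2} by the same strategy that Theorem \ref{fieldfuncfix} uses to reduce Theorem \ref{Thm12} to Theorem \ref{Thm1}: exhibit an explicit matrix $\gamma \in \Gamma_0(M)$, for some level $M$ properly dividing $60$ with $5 \nmid M$, whose action on modular functions of $\Gamma_0(60)$ transports a witness identity for $5 \mid \phi_2(5n+3)$ into a witness identity for $5 \mid \psi_2(5n+4)$.

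First, I would form the 5-dissection modular functions
\begin{align*}
K_\phi(\tau) &:= \frac{1}{\Phi_2(q^5)}\sum_{n\equiv 3\bmod{5}}\phi_2(n)\, q^{(n-3)/5}, \\
K_\psi(\tau) &:= \frac{1}{\Psi_2(q^5)}\sum_{n\equiv 4\bmod{5}}\psi_2(n)\, q^{(n-4)/5 + 1},
\end{align*}
each of which becomes invariant under $\Gamma_0(60)$ after the substitution $q = e^{2\pi i \tau}$, so that the two theorems are equivalent to the divisibilities $5\mid K_\phi$ and $5\mid K_\psi$. Next, mimicking the derivation of (\ref{swsA}), I would search among the Atkin--Lehner involutions $W_d$ of $\Gamma_0(60)$ (with $d \parallel 60$) for one whose conjugate
\begin{align*}
\begin{pmatrix}1 & 1/2 \\ 0 & 1\end{pmatrix} W_d \begin{pmatrix}1 & 1/2 \\ 0 & 1\end{pmatrix}
\end{align*}
sends the eta expression $\Phi_2(\tau)/\Phi_2(25\tau)$ to a nonzero rational multiple of $\Psi_2(\tau)/\Psi_2(25\tau)$. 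Reducing the resulting matrix to a representative $\gamma\in\Gamma_0(M)$ would then, by the commutation argument exemplified in (\ref{switchA})--(\ref{switchB}), yield a mapping carrying $K_\phi$ to a unit multiple of $K_\psi$.

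Finally, I would apply $\gamma$ term-by-term to a witness identity for $5\mid K_\phi$ produced by a direct cusp analysis on $\mathrm{X}_0(60)$, and invoke the invariance of the Hauptmodul $t = \eta(5\tau)^6/\eta(\tau)^6$ under $\gamma$ together with the computed $\gamma$-action on a chosen set of module generators to read off the sought witness identity for $5\mid K_\psi$. The principal obstacle is the second step: nothing a priori guarantees that any single Atkin--Lehner involution of $\Gamma_0(60)$ has the right effect on all eight eta factors appearing in $\Phi_2(\tau)/\Phi_2(25\tau)$ and $\Psi_2(\tau)/\Psi_2(25\tau)$, and identifying one (or ruling the possibility out) requires case-by-case bookkeeping of how each $W_d$ permutes orders at the cusps of $\mathrm{X}_0(60)$. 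If no single involution suffices, the fallback is to compose two such involutions, which still commutes with $U_5$ but at the cost of the clean involution picture; in either case, the benefit is that the cusp analysis only needs to be carried out once, for $\phi_2$, and the result is then exported to $\psi_2$ by a formal transformation.
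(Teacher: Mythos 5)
Your proposal follows essentially the same route as the paper: there one conjugates the Atkin--Lehner involution $W=\left(\begin{smallmatrix}4 & 1\\ 300 & 76\end{smallmatrix}\right)$ by $\left(\begin{smallmatrix}1 & 1/2\\ 0 & 1\end{smallmatrix}\right)$, reduces to $\gamma=\left(\begin{smallmatrix}77 & 58\\ 150 & 113\end{smallmatrix}\right)$, verifies $\mathcal{B}^{(1)}(\gamma\tau)=\mathcal{B}^{(0)}$, commutes the action with $U_5$, and transports the witness identity term by term, exactly as you outline. The only correction is your side condition ``$5\nmid M$'': the $\gamma$ that works lies in $\Gamma_0(150)\subseteq\Gamma_0(5)$, and divisibility of the lower-left entry by $5$ is precisely what guarantees the invariance of $t=\eta(5\tau)^6/\eta(\tau)^6$ that you invoke in your final step.
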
  These two properties turn out to be equivalent to one another in the same manner of the congruence families for Theorems \ref{Thm1} and \ref{Thm12}.  Indeed, we can go further.  Theorem \ref{thmphi2} is consequent to the following witness identity derived by the first author:

\begin{align}
\frac{1}{\Phi_2(q^5)}\sum_{n=0}^{\infty}\phi_2(5n+3)q^{n+1} = 5t\cdot (-1+5P_1^{(1)}+60P_2^{(1)}-3P_3^{(1)})\in\mathcal{M}\left(\mathrm{X}_0(60)\right)\label{phi2id},
\end{align} with $t$ defined as before, and

\begin{align*}
P_1^{(1)} &=\frac{\eta(4\tau)\eta(5\tau)^{6}\eta(6\tau)^{2}\eta(60\tau)}{\eta(\tau)\eta(2\tau)\eta(3\tau)^{2}\eta(10\tau)^{2}\eta(12\tau)\eta(15\tau)\eta(20\tau)\eta(30\tau)},\\
P_2^{(1)} &=\frac{\eta(4\tau)\eta(5\tau)\eta(6\tau)^{2}\eta(30\tau)^{4}\eta(60\tau)}{\eta(\tau)\eta(2\tau)\eta(3\tau)^{2}\eta(10\tau)\eta(12\tau)\eta(15\tau)^{2}\eta(20\tau)},\\
P_3^{(1)} &=\frac{\eta(3\tau)\eta(4\tau)\eta(6\tau)^{2}\eta(10\tau)\eta(15\tau)\eta(60\tau)}{\eta(2\tau)\eta(5\tau)^{2}\eta(12\tau)\eta(20\tau)\eta(30\tau)^{2}}.
\end{align*}  Let us refer to the left-hand-side of (\ref{phi2id}) as $L^{(1)}$.

This identity can be derived by a straightforward but tedious process involving the modular cusp analysis, and we walk through this process in our Mathematica supplement (See the end of Section 1).  However, were Theorems \ref{thmphi2} and \ref{thmpsi2} equivalent in the manner of Theorems \ref{Thm1} and \ref{Thm12}, we would expect an analogous identity to hold for the generating function for $\psi_2(n)$.  Indeed, let us define

\begin{align*}
L^{(0)} := \frac{1}{\Psi_2(q^5)}\sum_{n=0}^{\infty}\psi_2(5n+4)q^{n+1}\in\mathcal{M}\left(\mathrm{X}_0(60)\right),
\end{align*} in a notation befitting for our approach.  We want to derive an identity for $L^{(0)}$ using $L^{(1)}$.  Consider the Atkin--Lehner involution
\begin{align*}
W =\begin{pmatrix}
4 & 1\\
300 & 76
\end{pmatrix}
\end{align*}  Conjugating once again with our sign-change matrix, we have
\begin{align*}
\begin{pmatrix}
1 & 1/2\\
0 & 1
\end{pmatrix}W\begin{pmatrix}
1 & 1/2\\
0 & 1
\end{pmatrix} = \begin{pmatrix}
154 & 116\\
300 & 226
\end{pmatrix}.
\end{align*}  We can immediately see that the associated action is equivalent to
\begin{align*}
\gamma =\begin{pmatrix}
77 & 58\\
150 & 113
\end{pmatrix}\in\Gamma_0(5).
\end{align*}  Thus $t$ is invariant under this transformation, and it can be shown that
\begin{align*}
t(\gamma\tau) &= t(\tau),\\
P_1^{(1)}(\gamma\tau) &=\frac{\eta(2\tau)^{2}\eta(5\tau)^{7}\eta(12\tau)\eta(20\tau)\eta(30\tau)^{2}}{\eta(\tau)^{2}\eta(3\tau)\eta(4\tau)\eta(6\tau)\eta(10\tau)^{5}\eta(15\tau)^{2}\eta(60\tau)},\\
P_2^{(1)}(\gamma\tau) &=\frac{\eta(2\tau)^{2}\eta(5\tau)^{2}\eta(12\tau)\eta(20\tau)\eta(30\tau)^{7}}{\eta(\tau)^{2}\eta(3\tau)\eta(4\tau)\eta(6\tau)\eta(10\tau)^{4}\eta(15\tau)^{3}\eta(60\tau)^{2}},\\
P_3^{(1)}(\gamma\tau) &=\frac{\eta(2\tau)^{2}\eta(3\tau)^{2}\eta(12\tau)\eta(20\tau)\eta(30\tau)}{\eta(\tau)\eta(4\tau)\eta(5\tau)\eta(6\tau)\eta(10\tau)^{2}\eta(60\tau)}.
\end{align*}  So we have a means for naturally transforming the right-hand side of (\ref{phi2id}) to something reasonable.  To show that this is in fact $L^{(0)}$, we note that

\begin{align*}
\mathcal{B}^{(1)} = q^2\frac{\Phi_{2}(q)}{\Phi_{2}(q^{25})} = q^2\frac{(q^4;q^4)_{\infty} (q^{6};q^{6})_{\infty}^2(q^{25};q^{25})_{\infty}(q^{50};q^{50})_{\infty}(q^{75};q^{75})_{\infty}(q^{300};q^{300})_{\infty}}{(q;q)_{\infty}(q^2;q^2)_{\infty}(q^3;q^3)_{\infty}(q^{12};q^{12})_{\infty}(q^{100};q^{100})_{\infty} (q^{150};q^{150})_{\infty}^2}
\end{align*} is a modular function over $\Gamma_0(300)$.  The same is true for
\begin{align*}
\mathcal{B}^{(0)} =\frac{1}{q^4}\frac{\Psi_{2}(q)}{\Psi_{2}(q^{25})} = \frac{1}{q^4}\frac{(q^2;q^2)_{\infty}^2 (q^{12};q^{12})_{\infty}(q^{25};q^{25})_{\infty}^2(q^{100};q^{100})_{\infty}(q^{150};q^{150})_{\infty}}{(q;q)_{\infty}^2(q^4;q^4)_{\infty}(q^6;q^6)_{\infty}(q^{50};q^{50})_{\infty}^2 (q^{300};q^{300})_{\infty}}
\end{align*}  Applying the $U_5$ operator to each of these functions will yield an object modular over $\Gamma_0(60)$.  Indeed,
\begin{align*}
U_5\left(\mathcal{B}^{(1)}\right) &= \frac{1}{\Phi_{2}(q^{5})}U_5\left(q^2\sum_{n=0}^{\infty}\phi_2(n)q^n\right)\\
&= \frac{1}{\Phi_{2}(q^{5})}U_5\left(\sum_{n\ge 2}\phi_2(n-2)q^n\right)\\
&= \frac{1}{\Phi_{2}(q^{5})}\sum_{5n\ge 2}\phi_2(5n-2)q^n\\
&= \frac{1}{\Phi_{2}(q^{5})}\sum_{n\ge 0}\phi_2(5n+3)q^{n+1}\\
&= L^{(1)}.
\end{align*}  By a similar manner, it may be shown that
\begin{align*}
U_5\left(\mathcal{B}^{(0)}\right) &= \frac{1}{\Psi_{2}(q^{5})}\sum_{n\ge 0}\psi_2(5n+4)q^{n}\\
&= L^{(0)}.
\end{align*}  If we examine the effect of $\gamma$ on $\mathcal{B}^{(\beta)}$, a manipulaiton of the corresponding eta quotients reveals that
\begin{align*}
\mathcal{B}^{(1)}(\gamma\tau) = \mathcal{B}^{(0)},
\end{align*} and
\begin{align*}
L^{(1)}(\gamma\tau) = U_5\left(\mathcal{B}^{(1)}(\gamma\tau)\right) &= U_5\left(\mathcal{B}^{(0)}\right) = L^{(0)}.
\end{align*}  Thus,
\begin{align}
\frac{1}{\Psi_2(q^5)}\sum_{n=0}^{\infty}\psi_2(5n+4)q^{n} = 5t\cdot (-1+5P_1^{(0)}+60P_2^{(0)}-3P_3^{(0)}),
\end{align} and we have derived the corresponding witness identity for Theorem \ref{thmpsi2}.  Notice that again, the coefficients in the witness identity---including the factors relevant to the divisibility property in question---are identical for both $L^{(1)}$ and $L^{(0)}$.

It is noteworthy that while individual divisibility properties for $\phi_2(n)$ and $\psi_2(n)$ exist, they tend to be rare, and no infinite families divisible by, say, powers of 5 are known for either.  But this lack of congruence families is in itself interesting.  The existence of the equivalence between congruences mod 5 for these two functions also implies the equivalence of a lack of infinite families mod powers of 5 for either; if one such family existed for $\phi_2(n)$, we would expect to find an equivalent family exhibited by $\psi_2(n)$, and vice versa.

Thus the equivalences that we are constructing between various partition functions imply equivalences of nonexistence, as well as existence, of divisibility properties of interest.

\subsection{Congruences Related to the Crank Statistic}

A second tantalizing example is extremely recent and due to a collaboration between the first author and Connor Morrow \cite{GarvanM}.  In this section we will use $\alpha$ and $\beta$ as arithmetic functions, in contrast to the rest of this paper wherein both are used as indices.

We consider a congruence family for the function $\alpha(n)$, which we will here define simply through its generating function
\begin{align*}
\sum_{n=0}^{\infty} \alpha(n)q^n := \frac{(q^2;q^2)_{\infty}^3}{(q;q)_{\infty}^2}.
\end{align*}  This function bears close enumeration relationships to multiple other functions, including the restricted partition function $\overline{\mathcal{EO}}(n)$ studied by Andrews \cite{Andrews2}, and the 1-shell TSPP function studied by Hirschhorn with the second author \cite{Hirschhorn}, and Chern \cite{Chern2}.

\begin{theorem}\label{ThmMG1}
Let $n,m\in\mathbb{Z}_{\ge 1}$ such that $6n\equiv -1\pmod{5^{m}}$.  Then $\alpha(n)\equiv 0\pmod{5^{\left\lceil m/2\right\rceil}}$.
\end{theorem}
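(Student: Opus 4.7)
The plan is to adapt the equivalence-of-congruence-families machinery of Section~\ref{mainthmetc} to the present setting. Since the hypothesis $6n\equiv -1\pmod{5^m}$ matches precisely the hypothesis of Theorem~\ref{Thm12}, it is natural to try to recognise the family for $\alpha(n)$ as directly tethered to the family for $c\psi_2(n)$ via a ring mapping on the function field of $\mathrm{X}_0(20)$ analogous to $\sigma^{(\beta)}$.

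First I would construct, in parallel with the $L^{(\beta)}_{\alpha}$ of Section~\ref{mainthmetc}, a sequence
\[
L_m \;:=\; \frac{1}{A(q^{1+4a})}\sum_{n}\alpha(n)\,q^{\lfloor n/5^m\rfloor + \delta},
\]
where the sum runs over $n$ with $6n\equiv -1\pmod{5^m}$, $A(q):=\sum_{n\ge 0}\alpha(n)q^n$ (which, up to a fractional power of $q$, is the level-$2$ eta-quotient $\eta(2\tau)^3/\eta(\tau)^2$), $a\in\{0,1\}$ matches the parity of $m$, and $\delta$ is a small offset fixing holomorphicity at infinity. The standard Ramanujan--Kolberg/Radu apparatus applied to this eta-quotient, twisted by the $5$-dissection, should realise each $L_m$ as a modular function on $\mathrm{X}_0(20)$ and produce $U_5$-type operators $\widetilde{U}_a$ satisfying $L_{m+1}=\widetilde{U}_a(L_m)$.

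Second, I would look for an explicit transformation $\widetilde\gamma\in\Gamma_0(10)$, constructed exactly as in (\ref{swsA}) by conjugating a suitable Atkin--Lehner involution of level $20$ by the half-integer sign-change matrix $\left(\begin{smallmatrix}1&1/2\\0&1\end{smallmatrix}\right)$, together with an auxiliary eta-quotient $E(\tau)$ modular on $\Gamma_0(20)$ for which
\[
E(\tau)\cdot A(\widetilde\gamma\tau) \;=\; \mathrm{C}\Psi_{2,0}(q).
\]
If $E(\tau)$ is tame enough to commute controllably with $U_5$ (as happened for $\mathcal{A}^{(\beta)}$ in Section~\ref{mainthmetc}), then $\widetilde\gamma$ transports the $5$-adic divisibility of the $c\psi_2$ family directly onto $L_m$ via Theorem~\ref{Thm12}.

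The chief obstacle is the gap between the exponent $m$ in Theorem~\ref{Thm12} and the weaker exponent $\lceil m/2\rceil$ required here. A naive $\widetilde\gamma$-twist would over-shoot by $5^{\lfloor m/2\rfloor}$, so either the stronger divisibility $5^{m}\mid \alpha(n)$ in fact holds (a statement worth testing numerically before anything else, since if true the theorem is an immediate corollary of Theorem~\ref{Thm12}) or the correspondence is only one-sided, mapping $L_m$ to $L^{(0)}_{\lceil m/2\rceil}$ multiplied by a factor whose $5$-adic valuation is exactly $\lfloor m/2\rfloor$. The latter case is the more plausible one and would, in the spirit of Section~\ref{sectreduc}, likely descend after multiplication by $E(\tau)$ to a modular function on the simpler genus-$0$ curve $\mathrm{X}_0(10)$, on which the relevant $U_5$-cascade advances the $5$-adic valuation only every other step rather than at every step. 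Once $\widetilde\gamma$ and $E(\tau)$ have been exhibited explicitly, the induction on $m$ would close by verifying the base case by a direct eta-quotient identity and invoking the commutation of $\widetilde\gamma$ with $U_5$ (as in (\ref{switchA})--(\ref{switchB})) to reduce the inductive step to Theorem~\ref{Thm12}.
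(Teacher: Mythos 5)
There is a genuine gap: you have chosen the wrong partner family. The resemblance of the hypothesis $6n\equiv -1\pmod{5^m}$ to that of Theorem~\ref{Thm12} is superficial, and the exponent mismatch you flag at the end is not a technical wrinkle to be absorbed but a fatal obstruction to your plan. The equivalence machinery of Section~\ref{mainthmetc} transports \emph{identical} polynomial coefficients between the module representations of the two $L$-sequences, and hence identical $5$-adic valuations; a family whose valuation grows like $\lceil m/2\rceil$ cannot be tethered in this way to one whose valuation grows like $m$. Neither of your two escape routes survives: the stronger divisibility $5^m\mid\alpha(n)$ is false (the exponent $\lceil m/2\rceil$ is the characteristic ``half-speed'' growth of the $\overline{\mathcal{EO}}$ and 1-shell TSPP families, and small cases already refute $5^2\mid\alpha(104)$ being upgradable to $5^3$), and the proposed ``one-sided correspondence'' sending $L_m$ to $L^{(0)}_{\lceil m/2\rceil}$ times a factor of exact valuation $\lfloor m/2\rfloor$ is not a coherent statement, since the two sequences extract arithmetic progressions modulo \emph{different} powers of $5$ and so cannot differ by a fixed multiplier. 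There is also no reason to expect an eta quotient $E$ with $E(\tau)A(\widetilde\gamma\tau)=\mathrm{C}\Psi_{2,0}(q)$: the two generating functions have different half-integral weights and naturally different levels ($A$ lives at level $2$, with the $L_m$ modular over $\Gamma_0(10)$, not $\Gamma_0(20)$ as you assume).

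The route taken in the paper (following \cite{GarvanM}) pairs $\alpha(n)$ instead with the crank parity function $\beta(n)=M_e(n)-M_o(n)$, whose generating function $(q;q)_{\infty}^3/(q^2;q^2)_{\infty}^2$ is the natural dual of $(q^2;q^2)_{\infty}^3/(q;q)_{\infty}^2$, and whose Choi--Kang--Lovejoy family (Theorem~\ref{ThmMG2}) has the \emph{matching} half-speed exponent: $24n\equiv 1\pmod{5^{2m+1}}$ forces $5^{m+1}\mid\beta(n)$. The equivalence is realized by the level-$10$ Atkin--Lehner involution $V=\left(\begin{smallmatrix}2&1\\10&6\end{smallmatrix}\right)$ applied directly, with no conjugation by the sign-change matrix, giving $L_m^{(\alpha)}(V\tau)=L_m^{(\beta)}$ for all $m\ge 1$ with identical Hauptmodul coefficients on both sides; Theorem~\ref{ThmMG1} then follows from the Choi--Kang--Lovejoy result \cite{Choi}. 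If you want to salvage your approach, replace $c\psi_2$ by $\beta$ throughout and work on $\mathrm{X}_0(10)$; the base-case identities $L_1^{(\alpha)}$ and $L_1^{(\beta)}$ displayed in the paper are the correct starting point for the induction you sketch.
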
  Notice the similarity of the internal constraint to Theorem \ref{Thm12}.  The functions $L_{\alpha}$ associated with this congruence family are modular over $\Gamma_0(10)$.

Garvan and Morrow soon realized that this congruence family has a strange association with the crank parity function $\beta(n) := M_e(n) - M_o(n)$.  Here $M_e$ ($M_o$) counts the number of partitions of $n$ with even (odd) crank, respectively.  The generating function for $\beta(n)$ is
\begin{align*}
\sum_{n=0}^{\infty} \beta(n)q^n = \frac{(q;q)_{\infty}^3}{(q^2;q^2)_{\infty}^2}.
\end{align*}  The crank parity function exhibits a congruence family that was proved by Choi, Kang, and Lovejoy in 2009 \cite{Choi}:

\begin{theorem}\label{ThmMG2}
Let $n,m\in\mathbb{Z}_{\ge 1}$ such that $24n\equiv 1\pmod{5^{2m+1}}$.  Then $\beta(n)\equiv 0\pmod{5^{m+1}}$.
\end{theorem}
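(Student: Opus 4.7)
The plan is to reproduce for the pair $(\alpha,\beta)$ the Atkin--Lehner conjugation argument of Section \ref{mainthmetc}, thereby deriving Theorem \ref{ThmMG2} from Theorem \ref{ThmMG1} rather than reproving it from scratch as Choi, Kang, and Lovejoy did. First I would construct paired modular function sequences
\begin{align*}
L^{(\alpha)}_{m} &:= \frac{1}{\mathrm{A}(q^{5^{2m+1}})} \sum_{6n\equiv -1\bmod 5^{2m+1}} \alpha(n)\, q^{\lfloor n/5^{2m+1}\rfloor + c_\alpha},\\
L^{(\beta)}_{m} &:= \frac{1}{\mathrm{B}(q^{5^{2m+1}})} \sum_{24n\equiv 1\bmod 5^{2m+1}} \beta(n)\, q^{\lfloor n/5^{2m+1}\rfloor + c_\beta},
\end{align*}
where $\mathrm{A}(q)=(q^2;q^2)_{\infty}^{3}/(q;q)_{\infty}^{2}$ and $\mathrm{B}(q)=(q;q)_{\infty}^{3}/(q^2;q^2)_{\infty}^{2}$, with cusp shifts $c_\alpha,c_\beta$ arranged so that both sequences are modular over a common $\Gamma_0(N)$ for $N$ a small multiple of $40$. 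The indexing is chosen so that Theorem \ref{ThmMG1} applied at $m'=2m+1$ asserts $5^{m+1}\mid L^{(\alpha)}_m$, while Theorem \ref{ThmMG2} asserts $5^{m+1}\mid L^{(\beta)}_m$, making the two claims numerically match term by term.

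Next, imitating equation \eqref{swsA}, I would search for an Atkin--Lehner involution $W$ of this curve such that the conjugate
\begin{align*}
\begin{pmatrix} 1 & 1/2 \\ 0 & 1 \end{pmatrix} W \begin{pmatrix} 1 & 1/2 \\ 0 & 1 \end{pmatrix}
\end{align*}
reduces, after clearing the common scalar, to an element $\gamma\in\Gamma_0(5)$ which sends $\mathrm{A}$ to $\mathrm{B}$ as an eta quotient. Because $\mathrm{A}/\mathrm{B}=(q;q)_\infty^{5}/(q^{2};q^{2})_\infty^{5}$ is an explicit eta quotient, this reduces to a finite matching problem on the individual factors $\eta(\tau), \eta(2\tau), \eta(50\tau), \eta(100\tau),\ldots$. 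One then verifies that $\gamma$ simultaneously carries the internal constraint $24n\equiv 1\pmod{5^{2m+1}}$ to $6n\equiv -1\pmod{5^{2m+1}}$; this should be forced by the transformation law of the normalizing eta quotients and the invariance of $t$ under $\Gamma_0(5)$. With $\gamma$ in hand, the inductive argument of equations \eqref{switchA}--\eqref{switchB}, exploiting that both $W$ and the sign-change matrix commute with $U_5$, gives $L^{(\beta)}_m=L^{(\alpha)}_m\circ\gamma$ for all $m\ge 1$. The divisibility from Theorem \ref{ThmMG1} transfers immediately along $\gamma$, yielding Theorem \ref{ThmMG2}.

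The principal obstacle is pinning down the correct involution $W$ and confirming the induced eta-quotient transformation exactly, since an unaccounted root of unity from the multiplier systems could corrupt divisibility by a power of $5$. A secondary subtlety is the rate mismatch: Theorem \ref{ThmMG1} only increases divisibility every other step (via the ceiling $\lceil m/2\rceil$), so only the odd-indexed subfamily of $L^{(\alpha)}$ has a counterpart in the $\beta$-family, and the pairing is expected to be a bijection solely on that subsequence. Both difficulties were already resolved in Section \ref{mainthmetc} for $(c\psi_{2,0},c\psi_{2,1})$ and in Section \ref{subsecotherfrob} for $(\phi_2,\psi_2)$, so I expect direct eta-factor computation, followed by the standard $U_5$-commutation induction, to suffice here as well.
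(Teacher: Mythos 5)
Your overall strategy---transferring the divisibility from Theorem \ref{ThmMG1} to Theorem \ref{ThmMG2} by an Atkin--Lehner involution---is exactly the equivalence mechanism the paper describes (due to Garvan and Morrow \cite{GarvanM}); note, though, that the paper does not actually prove Theorem \ref{ThmMG2} this way, but cites the independent proof of Choi, Kang, and Lovejoy \cite{Choi}. The genuine gap in your proposal is the central step: you plan to conjugate an Atkin--Lehner involution $W$ by the sign-change matrix $\left(\begin{smallmatrix}1&1/2\\0&1\end{smallmatrix}\right)$ and reduce the result to an element $\gamma\in\Gamma_0(5)$. That reduction worked for the $c\psi_{2,\beta}$ pair only because the involution there, $\left(\begin{smallmatrix}4&-1\\100&-24\end{smallmatrix}\right)$, has determinant $4$, a perfect square, so the conjugate in (\ref{swsA}) could be divided by $2$ to give an integral matrix of determinant $1$. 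For the $(\alpha,\beta)$ pair, the involution that actually interchanges $\mathrm{A}=q^{-1/6}\eta(2\tau)^3\eta(\tau)^{-2}$ and $\mathrm{B}=q^{1/24}\eta(\tau)^3\eta(2\tau)^{-2}$ must swap $\eta(\tau)$ with $\eta(2\tau)$, hence is $V=\left(\begin{smallmatrix}2&1\\10&6\end{smallmatrix}\right)$ of determinant $2$ (or $W_Q$ with $Q$ twice a square at higher level). Conjugation preserves the determinant, so no rescaling of the conjugate can land in $\mathrm{SL}_2(\mathbb{Z})$, let alone $\Gamma_0(5)$: your search would come up empty. The paper makes precisely this point at the end of Section \ref{sectaddit}, observing that for Theorems \ref{ThmMG1} and \ref{ThmMG2} one must use ``the associated involution alone,'' with no sign-change conjugation.

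This matters beyond bookkeeping, because your justification for transferring divisibility leans on $\gamma$ fixing $t$ and hence fixing the coefficients of the module representation. Once $\gamma$ is replaced by the genuine involution $V$, which does not fix the Hauptmodul, you need the substitute argument the paper sketches: $L_m^{(\alpha)}$ is an integral polynomial, with coefficients divisible by the required power of $5$, in a Hauptmodul $\xi$, and $\xi(V\tau)=\zeta$ is again an eta quotient with integral $q$-expansion, so $L_m^{(\beta)}=L_m^{(\alpha)}(V\tau)$ inherits the divisibility coefficient by coefficient. Your proposal never supplies this, and without it the bare relation $L^{(\beta)}_m=L^{(\alpha)}_m\circ V$ does not obviously preserve $5$-divisibility of Fourier coefficients. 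Two smaller points: $\mathrm{A}/\mathrm{B}=(q^2;q^2)_\infty^5/(q;q)_\infty^5$, not its reciprocal; and while your indexing remark is correct (Theorem \ref{ThmMG1} at $m'=2m+1$ gives exactly the needed $5^{m+1}$), you should verify rather than assume that the progressions $6n\equiv-1$ and $24n\equiv 1\pmod{5^{2m+1}}$---which are distinct residue classes once $m\ge 1$---really do correspond under $V$ via the $q$-prefactors of $\mathrm{A}$ and $\mathrm{B}$.
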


Let $L_m^{(\alpha)}$ be the corresponding modular function which enumerates the $m$-th case of Theorem \ref{ThmMG1}, and let $L_m^{(\beta)}$ be similarly defined for Theorem \ref{ThmMG2}.  For example,
\begin{align*}
L_1^{(\alpha)} &= \frac{(q^5;q^5)_{\infty}^2}{(q^{10};q^{10})_{\infty}^3}\sum_{n=0}^{\infty} \alpha(5n+4)q^n = 5\frac{(q^{2};q^{2})_{\infty}^2(q^{5};q^{5})_{\infty}^4}{(q;q)_{\infty}^4(q^{10};q^{10})_{\infty}^2},\\
L_1^{(\beta)} &= \frac{(q^{10};q^{10})_{\infty}^2}{(q^{5};q^{5})_{\infty}^3}\sum_{n=0}^{\infty} \beta(5n+4)q^{n+1} = 5q\frac{(q;q)_{\infty}^2(q^{10};q^{10})_{\infty}^4}{(q^2;q^2)_{\infty}^4(q^{5};q^{5})_{\infty}^2}.
\end{align*}  Consider the Atkin--Lehner involution
\begin{align*}
V =\begin{pmatrix}
2 & 1\\
10 & 6
\end{pmatrix}.
\end{align*}  Garvan and Morrow were able to show that for all $m\ge 1$,
\begin{align*}
L_m^{(\alpha)}\left( V\tau \right) = L_m^{(\beta)}.
\end{align*}  Once again, the functions $L_m^{(\alpha)}$, $L_m^{(\beta)}$ have identical representations in terms of their respective Hauptmoduln.  That is, the polynomial coefficients \textit{match}.  We have only to substitute one variable function for the other.

We see again that, as with Theorems \ref{Thm1} and \ref{Thm12}, the two congruence families given by Theorems \ref{ThmMG1} and \ref{ThmMG2} are equivalent: having either family allows us to demonstrate the existence of the other by applying the involution $V$.

The interest here is that the strange interrelationships between divisibility properties that we have discovered for $c\phi_{2}$ and $c\psi_{2}$, or indeed for $\phi_2$ and $\psi_2$ as in the prequel, are not at all restricted to Frobenius partitions.  Rather, they now bear close relationships to partitions with various different restrictions on even and odd parts ($\overline{\mathcal{E}\mathcal{O}}$), plane partitions (1-shell totally symmetric plane partitions), and---perhaps most dramatically---the crank parity function.

\subsection{Internal Congruences Related to Ramanujan's Theta Functions}

Yet another example of such equivalences of congruences comes from an elegant study \cite{Chern} by Chern and Tang of two families of \textit{internal} congruences modulo powers of 3 for certain quotients of Ramanujan's theta functions $\varphi(q)$, $\psi(q)$.  Let
\begin{align*}
F(\tau):=\sum_{n=0}^{\infty}ph_3(n)q^n :&= \frac{\varphi(-q^3)}{\varphi(-q)},\\
G(\tau):=\sum_{n=0}^{\infty}ps_3(n)q^n :&= \frac{\psi(q^3)}{\psi(q)},
\end{align*} wherein 
\begin{align*}
\varphi(-q) :&= \frac{(q;q)_{\infty}^2}{(q^2;q^2)_{\infty}},\\
\psi(q) :&=\frac{(q^2;q^2)^2_{\infty}}{(q;q)_{\infty}}.
\end{align*} (see Section \ref{sectinvol} below for further discussion especially on $\varphi(q)$).

Chern and Tang were able to prove the following internal congruence families:

\begin{theorem}\label{ct1}
For all $\alpha\ge 1$, we have
\begin{align}
ph_3\left(3^{2\alpha-1}n\right) \equiv ph_3\left(3^{2\alpha+1}n\right)\pmod{3^{\alpha+2}},
\end{align}
\end{theorem}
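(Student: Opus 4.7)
The plan is to adapt the localization framework of Section \ref{mainthmetc} to the modular curve $X_0(6)$, which is the natural home of $F(\tau) = \varphi(-q^3)/\varphi(-q)$ after standard multiplier adjustments. Reformulate the target congruence
\[
ph_3(3^{2\alpha+1} n) \equiv ph_3(3^{2\alpha-1} n) \pmod{3^{\alpha+2}}
\]
as the $3$-adic vanishing of a normalized sequence of modular functions: set
\[
L_\alpha(\tau) := \frac{1}{F(3^{2\alpha+1}\tau)} \sum_{n\ge 0} \bigl( ph_3(3^{2\alpha+1} n) - ph_3(3^{2\alpha-1} n) \bigr) q^n,
\]
multiplied by an appropriate eta quotient so that each $L_\alpha$ descends to a modular function on $\Gamma_0(6)$; the theorem is then equivalent to $L_\alpha \equiv 0 \pmod{3^{\alpha+2}}$. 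Derive a transfer relation $L_{\alpha+1} = \mathcal{U}(L_\alpha)$ with $\mathcal{U} = U_9 \circ M_{\mathcal{A}}$ for a fixed eta quotient $\mathcal{A}$ chosen so that the composition preserves modularity on $\Gamma_0(6)$; this parallels the construction of the operators $U^{(\beta)}_a$ earlier in the paper and follows from standard machinery (cf.\ \cite[Lemma 2.13]{Smoot}).

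Since $X_0(6)$ has genus $0$ and four cusps, fix a Hauptmodul $t$ (an eta quotient) together with one auxiliary generator $h$ capturing the pole behavior at the remaining cusps, so that after localization at a factor of the form $(1+ct)^{-1}$---the analogue of $(1+5x)^{-1}$ in the $X_0(20)$ setting---the space in which $L_\alpha$ will live is a rank-$2$ $\mathbb{Z}[t]$-module with basis $\{1, h\}$. Expand each $L_\alpha$ as $a_\alpha(t) + h \cdot b_\alpha(t)$ and reduce Theorem \ref{ct1} to showing $3^{\alpha+2} \mid a_\alpha, b_\alpha$ in the localized coefficient ring. Verify the base case $L_1 \equiv 0 \pmod{3^3}$ by a direct eta-quotient computation, then induct on $\alpha$: compute the $\mathcal{U}$-images of the basis monomials $\{t^i, h t^j\}$ for finitely many small $i,j$ using the level-$9$ modular equation for $t$, and verify that the resulting matrix of $\mathbb{Z}[t]$-coefficients is divisible by $3$, guaranteeing a gain of exactly one factor of $3$ per application of $\mathcal{U}$.

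The principal obstacle, in parallel with the $c\phi_2(n)$ case that motivates this paper, is establishing the uniform $3$-divisibility of the transition matrix for $\mathcal{U}$ and then showing that the resulting recurrence for $(a_\alpha, b_\alpha)$ stabilizes into a finitely describable congruence ideal. Fortunately, the genus-$0$ setting of $X_0(6)$ is substantially friendlier than the genus-$1$ setting of $X_0(20)$: the rank-$2$ module closely resembles the classical Ramanujan setting on $X_0(5)$, and one expects the congruence ideal sequence to stabilize in finitely many steps, so a direct modular-equation argument should close the induction without needing the more delicate stabilization analysis of \cite[Section 6]{Baner7}. Once Theorem \ref{ct1} is established in this form, the companion congruence family for $ps_3(n)$ that Chern and Tang also prove in \cite{Chern} should follow by the Atkin--Lehner transfer technique developed in Section \ref{proofAtkin1}, giving yet another instance of the ``equivalent congruence families'' phenomenon that is the central theme of the paper.
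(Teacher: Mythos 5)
First, a point of orientation: the paper does not actually prove Theorem \ref{ct1}. It is quoted from Chern and Tang \cite{Chern}; the paper only sketches their method (construct $\Phi_\alpha$, $\hat\Phi_\alpha$ modular over $\Gamma_0(18)$, and show each $\hat\Phi_\alpha$ is a polynomial in a single Hauptmodul $\xi$ living at the cusp $[0]$ with coefficients divisible by $3^{\alpha+2}$) and then contributes the observation that Theorem \ref{ct2} is equivalent to it via the involution $V$. Your closing remark about deducing the $ps_3$ family by an Atkin--Lehner transfer is exactly the paper's contribution, so that part is on target.

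Your reconstruction of the proof of Theorem \ref{ct1} itself, however, has concrete problems. The relevant modular curve is $\mathrm{X}_0(18)$, not $\mathrm{X}_0(6)$: although $F(\tau)=\varphi(-q^3)/\varphi(-q)$ has level $6$, the normalized generating functions $\Phi_\alpha$ require dividing by $F(q^3)$ (alternating with $F(q)$), and the Hauptmodul actually used, $\xi=\varphi(-q^9)/\varphi(-q)$, has level $18$. Your normalization $1/F(3^{2\alpha+1}\tau)$ is worse still --- it inflates the level without bound as $\alpha$ grows, so the sequence $L_\alpha$ would not live on any fixed curve and no single transfer operator $\mathcal{U}=U_9\circ M_{\mathcal{A}}$ could relate consecutive terms; the correct setup alternates two normalizations precisely to keep the level fixed at $18$, with $U_3$ (not $U_9$) as the basic operator, cf.\ $\Phi_1=U_3(F(\tau)/F(9\tau))$. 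Your rank-$2$ localized module with auxiliary generator $h$ is also not what is needed: since $\hat\Phi_\alpha$ lives at the single cusp $[0]$ of a genus-$0$ curve, it is a polynomial in $\xi$ alone, a rank-$1$ situation much closer to Watson's classical argument than to the localized $\mathrm{X}_0(20)$ machinery you are importing. Finally, the decisive step --- that the $U_3$-images of the monomials $\xi^i$ have $3$-adic valuations growing at the right rate to yield the modulus $3^{\alpha+2}$ (note the offset: the exponent grows by $1$ while the index of $\hat\Phi$ advances through two applications of $U_3$) --- is asserted rather than established; ``one expects the congruence ideal sequence to stabilize'' is precisely the part of such proofs that cannot be waved through, as the paper's own discussion of the failure of this expectation for Theorem \ref{Thm1} makes clear.
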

\begin{theorem}\label{ct2}
For all $\alpha\ge 1$, we have
\begin{align}
ps_3\left(3^{2\alpha-1}n + \frac{3^{2\alpha}-1}{4}\right) \equiv ps_3\left(3^{2\alpha+1}n+\frac{3^{2\alpha+2}-1}{4}\right)\pmod{3^{\alpha+2}}.
\end{align}
\end{theorem}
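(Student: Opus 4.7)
The plan is to establish Theorem \ref{ct2} by deriving it from Theorem \ref{ct1} via an Atkin--Lehner involution, paralleling the equivalence strategy developed in Sections \ref{mainthmetc} and \ref{sectaddit}. The starting observation is that
\begin{align*}
F(\tau) = \frac{\eta(2\tau) \eta(3\tau)^2}{\eta(\tau)^2 \eta(6\tau)}, \qquad G(\tau) = q^{-1/4} \frac{\eta(\tau) \eta(6\tau)^2}{\eta(2\tau)^2 \eta(3\tau)},
\end{align*}
so that as eta quotients on (a cover of) $\mathrm{X}_0(6)$ the two functions are interchanged, up to a factor of $q^{1/4}$ and an automorphy constant, by the Atkin--Lehner involution $W_2 = \left(\begin{smallmatrix} 2 & -1 \\ 6 & -2 \end{smallmatrix}\right)$, which swaps the divisor pairs $\{1,2\}$ and $\{3,6\}$. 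The offset $(3^{2\alpha}-1)/4$ appearing in the statement of Theorem \ref{ct2} is exactly what is needed to absorb this $q^{1/4}$ back into integer $q$-exponents after extracting the appropriate dissection of $G$, which is a clear signal that $W_2$, possibly after a further conjugation to handle the fractional $q$-exponent, is the correct mechanism.

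We would then define modular function sequences $\mathcal{L}_m^{(F)}$, $\mathcal{L}_m^{(G)}$ enumerating the successive cases of Theorems \ref{ct1} and \ref{ct2} respectively, built as differences of consecutive $3^{2m\pm 1}$-dissections normalized by appropriate eta quotients so as to lie in a space of modular functions on a curve $\mathrm{X}_0(N)$ with $18 \mid N$. A linear operator built as a twist of $U_9$ relates $\mathcal{L}_m^{(\bullet)}$ to $\mathcal{L}_{m+1}^{(\bullet)}$, just as in Section \ref{mainthmetc}. Mirroring Section \ref{proofAtkin1}, we would next conjugate $W_2$ by the sign-change matrix $\left(\begin{smallmatrix} 1 & 1/2 \\ 0 & 1 \end{smallmatrix}\right)$ (or a quarter-shift variant tailored to the $q^{-1/4}$ in $G$) to obtain an explicit element $\gamma$ of a standard congruence subgroup, verify the base identity $\mathcal{L}_1^{(F)}(\gamma\tau) = \mathcal{L}_1^{(G)}$ directly from the eta quotient representations, and then carry out the induction $\mathcal{L}_m^{(F)}(\gamma\tau) = \mathcal{L}_m^{(G)}$ using the compatibility of $\gamma$ with $U_9$ exactly along the lines of \eqref{switchA}--\eqref{switchB}. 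Since Chern and Tang already proved $\mathcal{L}_m^{(F)} \equiv 0 \pmod{3^{m+2}}$, the same divisibility is immediately inherited by $\mathcal{L}_m^{(G)}$, giving Theorem \ref{ct2}.

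The step we expect to be the main obstacle is the careful bookkeeping of the fractional $q$-exponent in $G$ together with its associated Nebentypus character. In the $c\phi_2 \leftrightarrow c\psi_2$ case, conjugation by $\left(\begin{smallmatrix} 1 & 1/2 \\ 0 & 1 \end{smallmatrix}\right)$ produced a clean element of $\Gamma_0(10)$, but here the quarter-period shift may force the correct $\gamma$ to live in a higher-level group and to carry a nontrivial character. Identifying this $\gamma$ precisely, and checking that it has the right compatibility with the differenced structure of the internal congruences, will require some finite but delicate computations with the automorphy factors of $\eta$. Once these are settled, the remainder of the proof is essentially a transcription of the induction carried out in Section \ref{proofAtkin1}.
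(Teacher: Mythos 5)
Your strategy is the same as the paper's: both Theorems \ref{ct1} and \ref{ct2} are due to Chern and Tang, and the paper's point is precisely that \ref{ct2} is inherited from \ref{ct1} by transporting the $\Phi$-sequence to the $\Psi$-sequence through an Atkin--Lehner involution that commutes with the relevant $U$-operator, so that the polynomial coefficients (divisible by $3^{\alpha+2}$) are preserved verbatim with the Hauptmodul $\xi$ replaced by $\zeta$. The difference is in implementation, and it matters: the obstacle you single out as the main difficulty --- the $q^{-1/4}$ in $G$ and the attendant need to conjugate $W_2=\left(\begin{smallmatrix}2&-1\\6&-2\end{smallmatrix}\right)$ by a quarter-shift, with its character bookkeeping --- simply does not arise in the paper's version. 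The paper works not with $F$ and $G$ themselves but with the quotients $\gamma=F(\tau)/F(9\tau)$ and $\delta=G(\tau)/G(9\tau)$, which are genuine eta quotients of level $54$ with integral $q$-exponents (the $q^{-1/4}$ and $q^{-9/4}$ cancel to $q^{2}$), and applies the involution $V=\left(\begin{smallmatrix}2&-1\\54&-26\end{smallmatrix}\right)$ at that level, obtaining $\gamma(V\tau)=q^{-2}\delta(\tau)$; the residual power of $q$ is exactly what produces the offsets $(3^{2\alpha}-1)/4$ under repeated application of $U_3$ (not a twisted $U_9$). Moreover, the paper explicitly notes that for this example, unlike the $c\phi_2\leftrightarrow c\psi_2$ case, no conjugation by $\left(\begin{smallmatrix}1&1/2\\0&1\end{smallmatrix}\right)$ and no descent to an element of a congruence subgroup is needed --- the involution alone suffices, together with $\Phi_1=U_3(\gamma)$, $\Psi_1=U_3(\delta)$, the commutation of $U_3$ with $V$, induction to get $\Psi_\alpha=\Phi_\alpha(V\tau)$, and the identity $\xi(V\tau)=\zeta(\tau)$. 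So your plan would likely succeed, but by importing the sign-change conjugation from Section \ref{proofAtkin1} you have manufactured a difficulty that the correct choice of level and of functions dissolves entirely.
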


The opening procedure in their approach is to construct two sequences of functions $\left( \Phi_{\alpha} \right)_{\alpha\ge 1}$, $\left(\Psi_{\alpha}\right)_{\alpha\ge 1}$, which are modular over $\Gamma_0(18)$, such that

\begin{align*}
\Phi_{2\alpha-1} :&= \frac{1}{F(q^3)}\sum_{n\ge 0}ph_3(3^{2\alpha-1}n)q^n,\\
\Phi_{2\alpha} :&= \frac{1}{F(q)}\sum_{n\ge 0}ph_3(3^{2\alpha+1}n)q^n,
\end{align*}
\begin{align*}
\Psi_{2\alpha-1} :&= \frac{1}{G(q^3)}\sum_{n\ge 0}ps_3\left(3^{2\alpha-1}n + \frac{3^{2\alpha}-1}{4}\right)q^n,\\
\Psi_{\alpha} :&= \frac{1}{G(q)}\sum_{n\ge 0}ps_3\left(3^{2\alpha}n + \frac{3^{2\alpha}-1}{4}\right)q^n.
\end{align*}

One then constructs $\left( \hat{\Phi}_{\alpha} \right)_{\alpha\ge 1}$, $\left(\hat{\Psi}_{\alpha}\right)_{\alpha\ge 1}$ such that

\begin{align*}
\hat{\Phi}_{\alpha} :&= \Phi_{2\alpha+1}-\Phi_{2\alpha-1},\\
\hat{\Psi}_{\alpha} :&= \Psi_{2\alpha+1}-\Psi_{2\alpha-1}.
\end{align*}  Of course, Theorems \ref{ct1} and \ref{ct2} are both implied from showing that 
\begin{align*}
\hat{\Phi}_{\alpha} \equiv 0\pmod{3^{\alpha+2}},\\
\hat{\Psi}_{\alpha} \equiv 0\pmod{3^{\alpha+2}}.
\end{align*}  Beginning with the proof of Theorem \ref{ct1}, Chern and Tang show that each $\hat{\Phi}_{\alpha}$ is expressible as a polynomial in a certain Hauptmodul $\xi$ which lives at the cusp $[0]$ (the corresponding modular curve has genus 0), with integer coefficients divisible by $3^{\alpha+2}$.  They then show by a recursive process that $\hat{\Psi}_{\alpha}$ is represented with the same polynomials, except that the Hauptmodul $\xi$ is replaced with another, $\zeta$, which lives at the cusp $[1/2]$.

As in the previous examples of this section, the phenomenon of matching coefficients for the polynomial representations of $\hat{\Phi}_{\alpha}$ and $\hat{\Psi}_{\alpha}$ strongly suggests that an approach analogous to our work on $c\psi_{2,\beta}(n)$ should be applicable.  Indeed, let us take the Atkin--Lehner involution
\begin{align*}
V =\begin{pmatrix}
2 & -1\\
54 & -26
\end{pmatrix}.
\end{align*}  Through a straightforward manipulation of the eta transformation formulae, it can be shown that for
\begin{align}
\gamma := \frac{F(\tau)}{F(9\tau)},\\
\delta := \frac{G(\tau)}{G(9\tau)},
\end{align} we have
\begin{align}
\gamma\left(V\tau\right) = q^{-2}\delta(\tau).
\end{align}  Since Chern and Tang show that
\begin{align}
\Phi_{1} &= U_3\left( \gamma \right),\\
\Psi_{1} &= U_3\left( \delta \right),
\end{align} and since the $U_3$ operator commutes with the action of $V$, we have
\begin{align}
\Psi_{1} &= U_3\left( \delta \right)\\
&= U_3\left( \gamma(V\tau) \right)\\
&= U_3\left( \gamma \right)(V\tau)\\
&= \Phi_ {1}(V\tau).
\end{align}  In similar manner, one can prove that for every $\alpha\ge 1$, we have
\begin{align}
\Psi_{\alpha} &= \Phi_ {\alpha}(V\tau).
\end{align}

On the other hand, the Hauptmoduln $\xi$ and $\zeta$ are described in \cite{Chern} as
\begin{align*}
\xi := \frac{\varphi\left(-q^9\right)}{\varphi\left(-q\right)} = \frac{(q^2;q^2)_{\infty}(q^9;q^9)_{\infty}^{2}}{(q;q)_{\infty}^{2}(q^{18};q^{18})_{\infty}},\\
\zeta := q\frac{\psi\left( q^9 \right)}{\psi\left( q \right)} = \frac{(q;q)_{\infty}(q^{18};q^{18})_{\infty}^2}{(q^2;q^2)_{\infty}^2(q^9;q^9)_{\infty}},
\end{align*} and it can be shown that $\xi(V\tau) = \zeta(\tau)$.

Since sending $\tau\mapsto V\tau$ is linear, this preserves the associated coefficients.

One interesting difference between the latter two examples of equivalence between congruences and those of Section \ref{subsecotherfrob} and Theorems \ref{Thm1} and \ref{Thm12} is that one does not construct an explicit transformation through a congruence subgroup, e.g., an element of $\Gamma_0(10)$ or $\Gamma_0(6)$.  Of course, for the equivalence of Theorems \ref{Thm1} and \ref{Thm12}, we do use an Atkin--Lehner involution in order to construct the corresponding element of $\Gamma_0(10)$; but for the equivalence of Theorems \ref{ThmMG1} and \ref{ThmMG2}, or for the equivalence of Theorems \ref{ct1} and \ref{ct2}, we can use the associated involution alone.

\subsection{Summary}

The multiple examples above, coupled with their variety, indicate that our approach to $c\psi_{2,\beta}(n)$ is not an isolated example.  This way of looking at congruence properties associated with modular curves of composite level appears to be extraordinarily applicable, and may give some indication as to why these congruences tend to be more difficult to understand than those associated with curves of prime level.

\section{Level Reduction Theorem}\label{sectreduc}

Let us return again to the congruences for $c\psi_{2,\beta}(n)$.  We have discovered that approaching these families via Theorem \ref{fieldfuncfix} gives us some additional and important context to ongoing research in congruences families associated with composite level via the localization method, as well as to strange phenomena that were originally noted by Paule and Radu in their proof of Theorem \ref{Thm1}.  In this section we will focus on the former issue, and we will discuss the latter in the following section.

On comparing (\ref{be1a0})-(\ref{be1a1}) with (\ref{be0a0})-(\ref{be0a1}), one discovers the following:
\begin{align}
p^{(0)}_{a}+p^{(1)}_{a}\in\mathbb{Z}[x]_{\mathcal{S}}.\label{reduceto10}
\end{align}  That is, the contribution in $y$ is annihilated.  Moreover, notice that by our choice, $x$ is not only modular over $\Gamma_0(20)$, but indeed over $\Gamma_0(10)$.  So not only does the addition of our module generators annihilate the $y$ terms, it actually reduces the functions entirely to a curve of lower level.

If we combine (\ref{reduceto10}) with Theorem \ref{fieldfuncfix}, and remember from (\ref{trep}) that $t$ can be expressed as a member of $\mathbb{Z}[x]_{\mathcal{S}}$, we immediately have the following:

\begin{theorem}\label{summod5}
Let
\begin{align}
L_{\alpha} := L_{\alpha}^{(0)} + L_{\alpha}^{(1)},
\end{align}  Then
\begin{align*}
L_{\alpha}\in\mathcal{M}\left( \mathrm{X}_0(10) \right).
\end{align*} and
\begin{align*}
L_{\alpha}\equiv 0\pmod{5^{\alpha}}.
\end{align*}
\end{theorem}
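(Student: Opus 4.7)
The theorem comprises two assertions: the divisibility $L_{\alpha}\equiv 0\pmod{5^{\alpha}}$ and the level-reduction $L_{\alpha}\in\mathcal{M}\left(\mathrm{X}_0(10)\right)$. The divisibility is essentially free, since Theorem \ref{Thm1} yields $5^{\alpha}\mid L_{\alpha}^{(1)}$ and Theorem \ref{Thm12} (established via Theorem \ref{fieldfuncfix}) yields $5^{\alpha}\mid L_{\alpha}^{(0)}$, so the sum is divisible by $5^{\alpha}$. The real content is the modularity statement, and my plan for this is to combine the uniform polynomial representations guaranteed by Theorem \ref{fieldfuncfix} with the cancellation identity (\ref{reduceto10}).

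Concretely, I would first write $L_{\alpha}^{(1)}=A_{\alpha}(t)+B_{\alpha}(t)p_{a}^{(1)}$ with $A_{\alpha},B_{\alpha}\in\mathbb{Z}[t]$ (where $a$ denotes the parity of $\alpha$); such a decomposition exists and is unique because $p_{a}^{(1)}\notin\mathbb{C}(t)$. Applying the ring map $\sigma^{(1)}$ of Theorem \ref{fieldfuncfix} and using $\sigma^{(1)}(t)=t$ together with $\sigma^{(1)}(p_{a}^{(1)})=p_{a}^{(0)}$, I would then deduce $L_{\alpha}^{(0)}=A_{\alpha}(t)+B_{\alpha}(t)p_{a}^{(0)}$ with the \emph{same} polynomials $A_{\alpha},B_{\alpha}$. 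Hence
\begin{align*}
L_{\alpha} = 2A_{\alpha}(t)+B_{\alpha}(t)\left(p_{a}^{(0)}+p_{a}^{(1)}\right).
\end{align*}

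The cancellation (\ref{reduceto10}) gives $p_{a}^{(0)}+p_{a}^{(1)}\in\mathbb{Z}[x]_{\mathcal{S}}$ for each $a\in\{0,1\}$; combined with $t\in\mathbb{Z}[x]_{\mathcal{S}}$ from (\ref{trep}), this places $L_{\alpha}$ in $\mathbb{Z}[x]_{\mathcal{S}}$. Since $x$ (hence also $1+5x$) is modular over $\Gamma_0(10)$, every element of $\mathbb{Z}[x]_{\mathcal{S}}$ is a modular function on $\mathrm{X}_0(10)$, completing the argument. The only step requiring genuine verification is the cancellation (\ref{reduceto10}) itself, which is a finite and mechanical check on the explicit expressions (\ref{be0a0})--(\ref{be1a1}); the conceptual subtlety to watch for is that the matching of polynomial coefficients between the $\beta=0$ and $\beta=1$ expansions is a genuine consequence of Theorem \ref{fieldfuncfix}, not merely of the analogous rank-2 module structures---without this, the sum $p_{a}^{(0)}+p_{a}^{(1)}$ would not be the natural object to examine.
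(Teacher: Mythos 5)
Your proposal is correct and follows essentially the same route as the paper: the divisibility is inherited from Theorems \ref{Thm1} and \ref{Thm12}, and the level reduction comes from combining the matching polynomial representations guaranteed by Theorem \ref{fieldfuncfix} with the cancellation $p_{a}^{(0)}+p_{a}^{(1)}\in\mathbb{Z}[x]_{\mathcal{S}}$ of (\ref{reduceto10}) and the fact that $t\in\mathbb{Z}[x]_{\mathcal{S}}$ with $x$ modular over $\Gamma_0(10)$. Your explicit write-out of $L_{\alpha}=2A_{\alpha}(t)+B_{\alpha}(t)\bigl(p_{a}^{(0)}+p_{a}^{(1)}\bigr)$ is exactly the step the paper leaves implicit in its ``we immediately have'' remark.
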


While the methods of localization described above have failed to resolve Theorems \ref{Thm1} and \ref{Thm12}, it has been used successfully on congruence families associated with $\mathrm{X}_0(10)$.  As such, Theorem \ref{summod5} implies that localization may be able to prove the truth of one congruence family relative to the other.  That is, localization is not sufficient to prove Theorem \ref{Thm1}; however, it \textit{may} be able to prove Theorem \ref{Thm1} \textit{provided} that Theorem \ref{Thm12} is true, and vice versa.

We are as of yet unable to immediately prove this application of localization, though we believe that it is accessible.

\begin{conjecture}\label{conjB}
Theorem \ref{summod5} is accessible via the localization method.
\end{conjecture}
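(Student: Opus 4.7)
The plan is to recast the recursion for the combined sequence $(L_{\alpha})_{\alpha\ge 1}$ entirely within the function field of $\mathrm{X}_0(10)$ and apply a standard localization argument of the kind that has succeeded for congruence families on that simpler curve (compare the treatment of $\alpha(n)$ in \cite{GarvanM}). The base ring will be $\mathbb{Z}[x]_{\mathcal{S}}$ with $\mathcal{S}=\{(1+5x)^{n}:n\ge 0\}$; since $x$ is already $\Gamma_0(10)$-modular and $t=x(1+4x)^{2}/(1+5x)$ lies in $\mathbb{Z}[x]_{\mathcal{S}}$, the module identity (\ref{reduceto10}) places each $L_{\alpha}$ in this ring. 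My first concrete step would be to carry out the cusp analysis of $x$ and $1+5x$ as $\Gamma_0(10)$-modular functions, verifying that $\mathbb{Z}[x]_{\mathcal{S}}$ is precisely the ring of $\Gamma_0(10)$-modular functions regular away from one distinguished cusp, which is the standard setting in which the localization method operates.

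Next I would construct an intrinsic operator $\mathfrak U_a$ on $\mathbb{Z}[x]_{\mathcal{S}}$ realizing the recursion $\mathfrak U_a(L_{\alpha})=L_{\alpha+1}$ for $a\equiv \alpha\pmod 2$. Since $[\Gamma_0(10):\Gamma_0(20)]=2$, Theorem \ref{fieldfuncfix} exhibits
\[
L_{\alpha}=L_{\alpha}^{(1)}(\tau)+L_{\alpha}^{(1)}(\gamma\tau)
\]
as the $\Gamma_0(10)$-trace of $L_{\alpha}^{(1)}$ along the single nontrivial coset represented by $\gamma$. Because $\gamma$ swaps the multipliers $\mathcal A^{(0)}$ and $\mathcal A^{(1)}$ and commutes with $U_{5}$ up to a coset permutation, the two-sequence recursion pushes through the trace in a controlled manner; the resulting operator $\mathfrak U_a$ should admit a description by enlarging the right-coset representatives used to define $U_5$ to a set adapted to $\Gamma_0(10)$-modular functions.

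With $\mathfrak U_a$ intrinsic, I would compute its action on the filtration of $\mathbb{Z}[x]_{\mathcal{S}}$ by degree in $x$, using the $5$-isogeny modular equation for $x$ obtained from Paule and Radu's modular equation for $t$ by inversion of the relation $t=x(1+4x)^{2}/(1+5x)$. The induction would proceed with a hypothesis of the form $L_{\alpha}\in 5^{\alpha}\mathcal I_{\alpha}$ for a nested family of ideals $\mathcal I_{1}\supseteq \mathcal I_{2}\supseteq \cdots$ in $\mathbb{Z}[x]_{\mathcal{S}}$, the aim being to show that this chain stabilizes in the sense of \cite[Section 6]{Baner7}; this would close the induction and yield $L_{\alpha}\equiv 0\pmod{5^{\alpha}}$.

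The main obstacle is precisely this stabilization, which is the step that presently prevents the localization method from resolving Theorems \ref{Thm1} and \ref{Thm12} individually on $\mathrm{X}_0(20)$. My optimism here rests on the dimensional collapse from $\mathrm{X}_0(20)$ (genus $1$, six cusps) to $\mathrm{X}_0(10)$ (genus $0$, four cusps): the associated $\mathbb{Z}[x]_{\mathcal{S}}$-module should be of substantially lower rank than the rank-$3$ $\mathbb{Z}[t]$-module of Paule and Radu, and the nilpotent structure of $\mathfrak U_a \bmod 5$ on this smaller module should lie within reach of the kind of finite verification that controls such stabilizations elsewhere in the localization literature.
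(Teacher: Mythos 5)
The statement you are addressing is a \emph{conjecture} in the paper: the authors explicitly state that they are ``as of yet unable to immediately prove this application of localization,'' so there is no proof in the paper to compare against. Your proposal does not close this gap either. The entire argument funnels into the final inductive step --- establishing that the chain of congruence ideals $\mathcal{I}_1 \supseteq \mathcal{I}_2 \supseteq \cdots$ stabilizes in the sense of \cite[Section 6]{Baner7} --- and at that point you offer only the heuristic that the collapse from $\mathrm{X}_0(20)$ to $\mathrm{X}_0(10)$ should make the module small enough for the ``nilpotent structure of $\mathfrak{U}_a \bmod 5$'' to be verified finitely. That is precisely the authors' own stated reason for believing the conjecture, not an argument for it; you have reproduced the motivation for Conjecture \ref{conjB} rather than resolved it. Until the stabilization is actually exhibited (e.g., by computing the relevant $U$-action on a $\mathbb{Z}[x]_{\mathcal{S}}$-module basis and verifying the required $5$-adic valuations on the structure constants), the induction does not close and no congruence $L_\alpha \equiv 0 \pmod{5^\alpha}$ follows.

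Two smaller points. First, your claim that $\mathbb{Z}[x]_{\mathcal{S}}$ is ``precisely the ring of $\Gamma_0(10)$-modular functions regular away from one distinguished cusp'' is not right as stated: $\mathrm{X}_0(10)$ has four cusps, and localizing at $\mathcal{S}=\{(1+5x)^n\}$ deliberately admits functions with poles at the zeros of $1+5x$, which lie at cusps other than the distinguished one --- that is the whole point of localization, and conflating the localized ring with a single-cusp function ring obscures exactly where the difficulty lives. Second, your trace identity $L_\alpha = L_\alpha^{(1)}(\tau) + L_\alpha^{(1)}(\gamma\tau)$ is a sound and useful reformulation of Theorem \ref{summod5} (since $[\Gamma_0(10):\Gamma_0(20)]=2$ and $\gamma$ represents the nontrivial coset), and packaging the combined sequence as a coset trace is a reasonable way to set up an intrinsic operator on $\mathrm{X}_0(10)$; but constructing $\mathfrak{U}_a$ and verifying that it genuinely preserves $\mathbb{Z}[x]_{\mathcal{S}}$ with controlled $5$-divisibility is itself nontrivial and is asserted rather than carried out. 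As it stands, the proposal is a plausible research program that terminates at the same open obstruction the paper identifies, not a proof.
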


\section{The Atkin--Lehner Involution}\label{sectinvol}

We close our report with a discussion of the dual relationship our congruence pair has with respect to Ramanujan's classic $\varphi(q)$ function via a certain Atkin--Lehner involution.  This partly answers (or at least extends) some of the questions which Paule and Radu posed in their proof of Theorem \ref{Thm1}.  Recall the Atkin--Lehner involution from Section \ref{proofAtkin1} which arises from the action of the matrix
\begin{align*}
W := \begin{pmatrix}
4 & -1\\
100 & -24
\end{pmatrix}
\end{align*} on our variable $\tau\in\mathbb{H}$.

\begin{theorem}\label{lehnerxyz}
\begin{align*}
\text{Let } f = y^j \frac{x^m}{(1+5x)^n}\in\mathbb{Z}[x]_{\mathcal{S}}+y\mathbb{Z}[x]_{\mathcal{S}} \text{, for $j\in\{0,1\},$ and $m,n\in\mathbb{Z}_{\ge 0}$.  Then }
\end{align*}
\begin{align*}
f(W\tau) = (-1)^{m+n} 4^{n-2j-m}\frac{\left( 1+4x \right)^j\left(1 + 10x + 20y\right)^j \left(1 + 10x + 20x^2 + 4y\right)^m \left(1 + 5x + 5y\right)^n}{(1+5x)^{3j+2m+n}}.
\end{align*}
\end{theorem}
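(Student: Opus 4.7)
Since $\tau \mapsto W\tau$ induces a field automorphism on the meromorphic function field of $\mathrm{X}_0(20)$, I have $f(W\tau) = y(W\tau)^j \cdot x(W\tau)^m \cdot \bigl(1+5x(W\tau)\bigr)^{-n}$. Thus the plan is to compute the three atomic pieces $x(W\tau)$, $y(W\tau)$, and $(1+5x)(W\tau)$ in closed form as rational functions in $x$ and $y$, and then combine them multiplicatively and simplify using the algebraic relation defining $\mathrm{X}_0(20)$ in these coordinates.

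The matrix $W$ has $\det W = 4$ and the correct shape to represent the Atkin--Lehner involution $W_4$ on $\Gamma_0(20)$. Under $W_4$ the divisors of $20$ are paired as $1 \leftrightarrow 4$ and $5 \leftrightarrow 20$, with $2$ and $10$ self-paired. Writing the generators as eta quotients,
\begin{align*}
x \;=\; \frac{\eta(2\tau)\eta(10\tau)^3}{\eta(\tau)^3\eta(5\tau)},\qquad y \;=\; \frac{\eta(2\tau)^2\eta(4\tau)\eta(5\tau)\eta(20\tau)^3}{\eta(\tau)^5\eta(10\tau)^2},
\end{align*}
I apply the standard transformation formula for $\eta(n \cdot W_4\tau)$ factor-by-factor. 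Because $x$ and $y$ have weight zero, the $(100\tau-24)^{1/2}$ and multiplier-system factors all cancel, leaving $x(W\tau)$ and $y(W\tau)$ as new eta quotients over $\Gamma_0(20)$. Each must lie in the function field $\mathbb{C}(x,y)$, and I identify it by matching finitely many coefficients of its $q$-expansion against the $\mathbb{C}(x)$-basis $\{1,y\}$, localized at $1+5x$. I expect to arrive at
\begin{align*}
x(W\tau) \;=\; -\frac{1+10x+20x^2+4y}{4(1+5x)^2},\qquad y(W\tau) \;=\; \frac{(1+4x)(1+10x+20y)}{16(1+5x)^3},
\end{align*}
from which $1+5x(W\tau) = -(1+10x+20y)/\bigl(4(1+5x)^2\bigr)$ follows by direct subtraction.

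Substituting these three formulas into the multiplicative expression for $f(W\tau)$ and collecting exponents yields
\begin{align*}
f(W\tau) \;=\; (-1)^{m+n}\,4^{n-2j-m}\,\frac{(1+4x)^j\,(1+10x+20y)^{j-n}\,(1+10x+20x^2+4y)^m}{(1+5x)^{3j+2m-2n}}.
\end{align*}
To reconcile this with the stated formula I invoke the single algebraic identity
\begin{align*}
(1+5x+5y)(1+10x+20y) \;=\; (1+5x)^3,
\end{align*}
which is equivalent to the Weierstra\ss\ relation $4y^2+(1+6x)y - x^2(1+5x)=0$ that $x$ and $y$ satisfy as generators of the genus-one curve $\mathrm{X}_0(20)$. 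Multiplying the numerator and denominator by $(1+5x+5y)^n$ converts $(1+10x+20y)^{-n}(1+5x)^{2n}$ into $(1+5x+5y)^n(1+5x)^{-n}$ and produces exactly the form stated in the theorem.

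The main obstacle is the second step: tracking the roots of unity and multiplier factors in the eta transformations under $W_4$ accurately enough to pin down the precise scalar coefficients $-1/4$ and $1/16$ in the formulas for $x(W\tau)$ and $y(W\tau)$. The rational-function skeletons are forced by modularity and the pairing of cusps under $W_4$, but fixing the constants requires honest computation (either via the explicit Weil/Newman transformation of eta quotients or via truncated $q$-expansion fitting). Every subsequent step is purely algebraic.
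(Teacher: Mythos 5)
Your proposal is correct and follows essentially the same route as the paper: write $x$, $y$, and $1+5x$ as eta quotients, transform each eta factor under $W$ via the transformation formula, recognize the resulting eta quotients as modular functions on $\mathrm{X}_0(20)$, identify them inside $\mathbb{Z}[x]+y\mathbb{Z}[x]$ after clearing poles with powers of $1+5x$, and combine multiplicatively. The only divergence is cosmetic: the paper computes $1/(1+5x(W\tau))$ directly as its own eta quotient, obtaining $(1+5x)\cdot\frac{1}{1+5x(W\tau)}=1+5x+5y$ in one step, whereas you derive $1+5x(W\tau)$ from $x(W\tau)$ by subtraction and then invoke the curve relation $(1+5x+5y)(1+10x+20y)=(1+5x)^3$ to reach the stated form---both are valid, and your explicit identification of that relation with the Weierstra\ss\ equation of $\mathrm{X}_0(20)$ is a nice touch.
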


\begin{proof}
The functions $x$ and $y$ are modular over $\Gamma_0(20)$.  Each is itself an eta quotient, whose forms we repeat here:
\begin{align*}
x &= q\frac{(q^2;q^2)_{\infty}(q^{10};q^{10})_{\infty}^3}{(q;q)_{\infty}^3(q^5;q^5)_{\infty}} = \frac{\eta(2\tau)\eta(10\tau)^3}{\eta(\tau)^3\eta(5\tau)},\\
y &= q^2\frac{(q^2;q^2)_{\infty}^2(q^4;q^4)_{\infty}(q^5;q^5)_{\infty}(q^{20};q^{20})_{\infty}^3}{(q;q)_{\infty}^5(q^{10};q^{10})_{\infty}^2} = \frac{\eta(2\tau)^2 \eta(4\tau)\eta(5\tau)\eta(20\tau)^3}{\eta(\tau)^5\eta(10\tau)^2}.
\end{align*}  Also note that $1+5x$ is an eta quotient, since we have
\begin{align}
1+5x = \frac{(q^2;q^2)_{\infty}^5 (q^5;q^5)_{\infty}}{(q;q)_{\infty}^5(q^{10};q^{10})_{\infty}}
= \frac{\eta(2\tau)^5\eta(5\tau)}{\eta(\tau)^5\eta(10\tau)}.
\end{align}  Therefore, consider the action of $W$ on $\eta\left(\delta\tau\right)$ for $\delta|20$.  We have
\begin{align}
\delta W(\tau) = \frac{4\delta\tau  -\delta}{100\tau -24}= 
\begin{pmatrix}
4\delta & -\delta \\
100 & -24
\end{pmatrix}(\tau).
\end{align}  We want to express this in terms of the action of $\mathrm{SL}\left( 2,\mathbb{Z} \right)$.  For example, we have
\begin{align}
W(\tau) &= \frac{4\tau  -1}{100\tau -24} = 
\begin{pmatrix}
1 & 0\\
25 & 1
\end{pmatrix}(4\tau-1),\\
2W(\tau) &= \frac{8\tau  -2}{100\tau -24}= 
\begin{pmatrix}
2 & 1\\
25 & 13
\end{pmatrix}(2\tau-1),\\
4W(\tau) &= \frac{16\tau  -4}{100\tau -24}= 
\begin{pmatrix}
4 & -1\\
25 & -6
\end{pmatrix}(\tau),\\
5W(\tau) &= \frac{20\tau  -5}{100\tau -24}= 
\begin{pmatrix}
1 & 0\\
5 & 1
\end{pmatrix}(20\tau-5),\\
10 W(\tau) &= \frac{40\tau  -10}{100\tau -24}= 
\begin{pmatrix}
2 & 1\\
5 & 3
\end{pmatrix}(10\tau-3),\\
20 W(\tau) &= \frac{80\tau  -20}{100\tau -24}= 
\begin{pmatrix}
4 & -1\\
5 & -1
\end{pmatrix}(5\tau-1).
\end{align}  If we consider, say, the involution applied to $x$, we have

\begin{align}
x\left( W\tau \right) &= \frac{\eta(2W\tau)\eta(10W\tau)^3}{\eta(W\tau)^3\eta(5W\tau)}\\
&= \frac{\eta\left(\begin{pmatrix}
2 & 1\\
25 & 13
\end{pmatrix}(2\tau-1)\right)\eta\left(\begin{pmatrix}
2 & 1\\
5 & 3
\end{pmatrix}(10\tau-3)\right)^3}{\eta\left(\begin{pmatrix}
1 & 0\\
25 & 1
\end{pmatrix}(4\tau-1)\right)^3\eta\left(\begin{pmatrix}
1 & 0\\
5 & 1
\end{pmatrix}(20\tau-5)\right)}\\
&= \frac{\left(-i\left(50\tau-12\right)\right)^{1/2}\left(-i\left(50\tau-12\right)\right)^{3/2}}{\left(-i\left(100\tau-24\right)\right)^{3/2}\left(-i\left(100\tau-24\right)\right)^{1/2}}\cdot\frac{\epsilon(2,1,25,13)\epsilon(2,1,5,3)^{3}}{\epsilon(1,0,25,1)^{3}\epsilon(1,0,5,1)}\cdot\frac{\eta(2\tau-1)\eta(10\tau-3)^3}{\eta(4\tau-1)^3\eta(20\tau-5)}\\
&= \frac{1}{4}\frac{\epsilon(2,1,25,13)\epsilon(2,1,5,3)^{3}}{\epsilon(1,0,25,1)^{3}\epsilon(1,0,5,1)}\cdot\exp\left( \frac{2\pi i}{12}\left( 2\tau-1+3(10\tau-3)-3(4\tau-1)-20\tau+5 \right) \right)\\
&\times\frac{(q^2;q^2)_{\infty}(q^{10};q^{10})_{\infty}^3}{(q^4;q^4)_{\infty}^3(q^{20};q^{20})_{\infty}}\\
&=-\frac{1}{4}\frac{(q^2;q^2)_{\infty}(q^{10};q^{10})_{\infty}^3}{(q^4;q^4)_{\infty}^3(q^{20};q^{20})_{\infty}}.
\end{align}  Notice that $x(W\tau)$ is modular over $\Gamma_0(20)$.  We next note that 

\begin{align}
(1+5x)^2 x(W\tau)\in\mathcal{M}^0\left( \mathrm{X}_0(20) \right) = \mathbb{Z}[x]+y\mathbb{Z}[x].
\end{align}  We can then compute that

\begin{align}
(1+5x)^2 x(W\tau) = -(1+10x+20x^2+4y).
\end{align}  In a similar manner, we can show that

\begin{align}
(1+5x)^3 y(W\tau) = (1+4x)(1+10x+20y),\\
(1+5x)\cdot\frac{1}{1+5x(W\tau)} = (1+5x+5y).
\end{align}

\end{proof}

We thus have a means of studying the effect of $W$ on our function spaces.  Now, Paule and Radu discovered that

\begin{align}
\frac{(q^{10};q^{10})^5_{\infty}}{(q^{5};q^{5})^2_{\infty}}L_{2\alpha-1}^{(1)}(W\tau) &= \sum_{n = 0}^{\infty}c_{2\alpha-1}^{(1)}(4n)q^{4n} + q\sum_{n = 0}^{\infty}c_{2\alpha-1}^{(1)}(4n+1)q^{4n},\\
\frac{(q^{2};q^{2})^5_{\infty}}{(q;q)^2_{\infty}}L_{2\alpha}^{(1)}(W\tau) &= \sum_{n = 0}^{\infty}c_{2\alpha}^{(1)}(4n)q^{4n} + q\sum_{n = 0}^{\infty}c_{2\alpha}^{(1)}(4n+1)q^{4n}.
\end{align}  This played a vital role in recognition of the module generators $p^{(1)}_{0}, p^{(1)}_{1}$.

Upon becoming aware of the congruence family of Theorem \ref{Thm12}, we attempted to find similar properties for the functions $L_{\alpha}^{(0)}$.  After some experimentation, which included derivation of Theorem \ref{lehnerxyz} above, we discovered that

\begin{align}
\frac{(q^{5};q^{5})^2_{\infty}}{(q^{10};q^{10})_{\infty}}L_{2\alpha-1}^{(0)}(W\tau) &= \sum_{n = 0}^{\infty}c_{2\alpha-1}^{(0)}(4n)q^{4n} + q\sum_{n = 0}^{\infty}c_{2\alpha-1}^{(0)}(4n+1)q^{4n},\\
\frac{(q;q)^2_{\infty}}{(q^2;q^2)_{\infty}}L_{2\alpha}^{(0)}(W\tau) &= \sum_{n = 0}^{\infty}c_{2\alpha}^{(0)}(4n)q^{4n} + q\sum_{n = 0}^{\infty}c_{2\alpha}^{(0)}(4n+1)q^{4n}.
\end{align}  The second author of this paper recognized the pattern between these relationships.  Notice that multiplication or division by any powers of $(q^4;q^4)_{\infty}$ will not affect the behavior of the powers of $q$ that Paule and Radu noticed.  With this in mind, we recall Ramanujan's phi function, effectively the theta function
\begin{align}
\varphi(q) := \frac{(q^{2};q^{2})^5_{\infty}}{(q;q)^2_{\infty}(q^4;q^4)_{\infty}^2} = \sum_{n=-\infty}^{\infty}q^{n^2}.
\end{align}  Notice that
\begin{align}
\varphi(-q) := \frac{(q;q)^2_{\infty}}{(q^2;q^2)_{\infty}}.
\end{align}  With these, we have the following result:

\begin{theorem}\label{sellersthm}
\begin{align}
\varphi\left((-1)^{\beta+1}q^{1+4a}\right) L_{\alpha}^{(\beta)}(W\tau) \in\mathbb{Z}[[q^4]]+q\mathbb{Z}[[q^4]].
\end{align}
\end{theorem}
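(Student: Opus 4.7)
The plan is to unify the four cases $(\beta,a)\in\{0,1\}^2$ of the theorem by reducing each to one of the two Paule-Radu identities displayed above.  The first ingredient is the pair of product expansions
\[
\varphi(q) = \frac{(q^2;q^2)^5_\infty}{(q;q)^2_\infty(q^4;q^4)^2_\infty}, \qquad \varphi(-q) = \frac{(q;q)^2_\infty}{(q^2;q^2)_\infty},
\]
which, with $q$ replaced by $q^{1+4a}$, identify the prefactor $\varphi\!\left((-1)^{\beta+1}q^{1+4a}\right)$ either with a Paule-Radu prefactor (when $\beta=1$) or with one of the newly observed prefactors (when $\beta=0$), up to multiplication or division by $\left(q^{4(1+4a)};q^{4(1+4a)}\right)_\infty^{2}$.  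Since this eta-product lies in $\mathbb{Z}[[q^4]]$, and the set $\mathbb{Z}[[q^4]]+q\mathbb{Z}[[q^4]]$ is closed under multiplication by $\mathbb{Z}[[q^4]]$, the $\beta=1$ cases of the theorem follow immediately from the Paule-Radu identities.

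For the $\beta=0$ cases I plan to bootstrap off of the $\beta=1$ cases via Theorem \ref{fieldfuncfix}, which gives $L^{(0)}_\alpha(\tau) = L^{(1)}_\alpha(\gamma\tau)$ for the explicit $\gamma\in\Gamma_0(10)$ constructed in Subsection \ref{proofAtkin1}.  A direct matrix calculation verifies the factorization
\[
\gamma W \;=\; M''\cdot W\, T^{1/2},
\]
in which $T^{1/2} := \left(\begin{smallmatrix} 1 & 1/2 \\ 0 & 1 \end{smallmatrix}\right)$ encodes the half-integer translation $\tau\mapsto\tau+\tfrac{1}{2}$ and $M''\in\Gamma_0(20)$ is an explicit integer matrix whose lower-left entry is divisible by $20$.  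Applying this identity in the upper half-plane and then absorbing $M''$ into the $\Gamma_0(20)$-invariance of $L^{(1)}_\alpha$ yields
\[
L^{(0)}_\alpha(W\tau) \;=\; L^{(1)}_\alpha\!\left(W\!\left(\tau+\tfrac{1}{2}\right)\right).
\]
At the level of Fourier series in $q = e^{2\pi i\tau}$, the shift $\tau\mapsto\tau+\tfrac{1}{2}$ acts as the substitution $q\mapsto -q$; hence the $q$-series $L^{(0)}_\alpha(W\tau)$ is obtained from the $q$-series $L^{(1)}_\alpha(W\tau)$ by replacing $q$ by $-q$.

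The $\beta=0$ cases of the theorem now follow from the $\beta=1$ cases.  Because $1+4a$ is odd, we have $\varphi\!\left(-q^{1+4a}\right) = \varphi\!\left(q^{1+4a}\right)|_{q\mapsto -q}$, so
\[
\varphi\!\left(-q^{1+4a}\right) L^{(0)}_\alpha(W\tau) \;=\; \left.\bigl[\varphi\!\left(q^{1+4a}\right) L^{(1)}_\alpha(W\tau)\bigr]\right|_{q\mapsto -q}.
\]
The right-hand side lies in $\mathbb{Z}[[q^4]]+q\mathbb{Z}[[q^4]]$ by the $\beta=1$ case, and this space is clearly preserved by the substitution $q\mapsto -q$ (it fixes $\mathbb{Z}[[q^4]]$ pointwise and carries $q\mathbb{Z}[[q^4]]$ to itself).

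The main obstacle is the matrix factorization $\gamma W = M''\, W T^{1/2}$ with $M''\in\Gamma_0(20)$: unlike the Möbius-level manipulations of Subsection \ref{proofAtkin1}, here one must keep track of determinants exactly, so that the subsequent appeal to $\Gamma_0(20)$-invariance of $L^{(1)}_\alpha$ is legitimate.  A useful cross-check is that $\gamma$ itself was built in Subsection \ref{proofAtkin1} as $T^{1/2}WT^{1/2}$ divided by the scalar $2$, so the outer $T^{1/2}$-translation in that expression is precisely what produces the $q\mapsto -q$ substitution at the level of Fourier coefficients.  Once the factorization is verified by direct computation, the remainder of the argument is purely formal.
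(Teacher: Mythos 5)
Your argument is correct, but it follows a genuinely different route from the paper's. The paper proves the theorem directly and uniformly in $\beta$: it uses Theorem \ref{lehnerxyz} to write $p^{(\beta)}_1(W\tau)$ as an explicit combination of eta quotients, multiplies by $\varphi\left((-1)^{\beta+1}q^5\right)$, and verifies by a computer-algebra computation (Radu's Ramanujan--Kolberg machinery) that the $4n+2$ and $4n+3$ progressions vanish, after which the even-$\alpha$ cases follow from the $U_5$ commutation. You instead (i) reduce the $\beta=1$ cases to the Paule--Radu dissection identities, correctly noting that the prefactors differ from $\varphi\left(q^{1+4a}\right)$ only by $\left(q^{4(1+4a)};q^{4(1+4a)}\right)_\infty^{-2}\in\mathbb{Z}[[q^4]]$, and (ii) transfer to $\beta=0$ via $L^{(0)}_\alpha = L^{(1)}_\alpha\circ\gamma$ together with the factorization $\gamma W = M''\,W T^{1/2}$; I have checked that this holds exactly with $M''=\left(\begin{smallmatrix}10127 & -397\\ 18800 & -737\end{smallmatrix}\right)\in\Gamma_0(20)$, so the conclusion $L^{(0)}_\alpha(W\tau)=L^{(1)}_\alpha(W\tau)\big|_{q\mapsto -q}$ is sound, and since $1+4a$ is odd the prefactors are exchanged by the same substitution. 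Your route buys three things: it eliminates the second computer verification (the $\beta=0$ case comes for free from $\beta=1$), it explains structurally why the two prefactors are $\varphi\left(\pm q^{1+4a}\right)$, and---because $\left[A(q^4)+qB(q^4)\right]_{q\mapsto -q}=A(q^4)-qB(q^4)$---it delivers Conjecture \ref{sellersthmA} as an immediate corollary, which the paper leaves open. What it costs is self-containedness: your base case rests on the Paule--Radu identities being theorems rather than empirical observations, so you should cite their proof in \cite{Paule} explicitly (or fall back on the paper's direct computation for $\beta=1$ only) to anchor the reduction.
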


\begin{proof}
This can be demonstrated by examining the $q$ subsequences in the progressions $4n+2$, $4n+3$ and showing that they vanish.  To do this, we first note (as Paule and Radu did) that progressions other than $4n$ all vanish for $t(W\tau)$.  We need only concern ourselves, then, with the effect of $W$ on $p^{(\beta)}_a$.

Beginning with $a=1$, we apply $W$ to $p^{(\beta)}_1$ by Theorem \ref{lehnerxyz} above.  This gives us a combination of eta quotients.  We multiply each eta quotient by $\varphi\left( (-1)^{\beta+1}q^{5} \right)$.  We no longer have modular functions over $\Gamma_0(20)$, but that is immaterial to us.  We next examine the progressions $4n+2, 4n+3$ that each eta quotient will contribute (providing of course for the coefficients of each eta quotient and its leading power of $q$).  Finally, we show that both progressions vanish.  This is done in our Mathematica supplement (see the end of Section 1), via a careful manipulation of the Ramanujan--Kolberg algorithm which was originally designed by Radu \cite{Radu} and implemented in \cite{Smoot1}.

In the case of even-indexed $\alpha$, we need only note that
\begin{align}
\varphi\left((-1)^{\beta+1}q\right) L_{2\alpha}^{(\beta)}(W\tau) &= U^{(\beta)}_1\left( \varphi\left((-1)^{\beta+1}q^{5}\right) L_{2\alpha-1}^{(\beta)}(W\tau) \right)\\
&= U_5\left( \varphi\left((-1)^{\beta+1}q^{5}\right) L_{2\alpha-1}^{(\beta)}(W\tau) \right).
\end{align}  Of course, $U_5$ extracts the coefficients of the form $5n$.  But $5n\equiv 2,3\pmod{4}$ if and only if $n\equiv 2,3\pmod{4}$.  Thus, these progressions must also vanish.
\end{proof}

It is certainly possible that a simpler proof may yet be found.  Indeed, in a similar vein we produce the following conjecture:

\begin{conjecture}\label{sellersthmA}
\begin{align}
\sum_{\beta=0,1}\varphi\left((-1)^{\beta+1}q^{1+4a}\right) L_{\alpha}^{(\beta)}(W\tau) \in\mathbb{Z}[[q^4]].
\end{align}
\end{conjecture}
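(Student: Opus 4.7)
The plan is to reduce the conjecture, via Theorem \ref{fieldfuncfix}, to a finite $q$-series identity involving only the four functions $p^{(\beta)}_a(W\tau)$ ($\beta,a\in\{0,1\}$), and to verify that identity using the same modular tools invoked in the proof of Theorem \ref{sellersthm}. First, by Theorem \ref{fieldfuncfix}, for each $\alpha$ of parity $a\in\{0,1\}$ there exist polynomials $A_\alpha,B_\alpha\in\mathbb{Z}[t]$ (depending only on $\alpha$, not on $\beta$) such that
\begin{align*}
L^{(\beta)}_\alpha = A_\alpha(t) + B_\alpha(t)\,p^{(\beta)}_a, \qquad \beta\in\{0,1\}.
\end{align*}
Applying $W$ and summing over $\beta$, one obtains
\begin{align*}
\sum_{\beta=0,1}\varphi\!\left((-1)^{\beta+1}q^{1+4a}\right) L^{(\beta)}_\alpha(W\tau) = A_\alpha(t(W\tau))\,\Sigma_a + B_\alpha(t(W\tau))\,\Xi_a,
\end{align*}
where $\Sigma_a:=\varphi(q^{1+4a})+\varphi(-q^{1+4a})$ and $\Xi_a:=\varphi(-q^{1+4a})\,p^{(0)}_a(W\tau)+\varphi(q^{1+4a})\,p^{(1)}_a(W\tau).$

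Next, the elementary identity $\varphi(q)+\varphi(-q)=2\varphi(q^4)$ yields $\Sigma_a\in\mathbb{Z}[[q^4]]$. Combined with the observation of Paule and Radu recalled in the proof of Theorem \ref{sellersthm}---namely that $t(W\tau)$ has Fourier support only on powers of $q^4$---this forces $A_\alpha(t(W\tau))\,\Sigma_a$ to have Fourier support on powers of $q^4$ alone. Since $B_\alpha(t(W\tau))$ likewise lives in $\mathbb{Z}((q^4))$, the $q^{4n+1}$-subseries of $B_\alpha(t(W\tau))\,\Xi_a$ equals $B_\alpha(t(W\tau))$ times the $q^{4n+1}$-subseries of $\Xi_a$. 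The conjecture thus reduces to the $\alpha$-independent claim
\begin{align*}
\text{the $q^{4n+1}$ subseries of }\Xi_a\text{ vanishes, for each }a\in\{0,1\}.
\end{align*}

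To establish this last claim, I would invoke Theorem \ref{lehnerxyz} to express each $p^{(\beta)}_a(W\tau)$ as an explicit eta quotient (equivalently, as a rational function in $x,y$ via (\ref{be1a0})--(\ref{be0a1})), so that $\Xi_a$ becomes an explicit $\mathbb{Q}$-linear combination of eta quotients multiplied by $\varphi(\pm q^{1+4a})$. Theorem \ref{sellersthm} already guarantees $\Xi_a\in\mathbb{Z}[[q^4]]+q\mathbb{Z}[[q^4]]$, so the only work left is a finite check that the $q^{4n+1}$ coefficients in fact cancel between the $\beta=0$ and $\beta=1$ contributions. This can be carried out by the same Ramanujan--Kolberg implementation \cite{Radu,Smoot1} employed in the proof of Theorem \ref{sellersthm}.

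The main obstacle is conceptual rather than technical: although the reduction above is clean and the computational verification is tractable, the resulting proof is not illuminating. A genuinely satisfying argument would exhibit an intrinsic reason why the odd-indexed subseries of $\Xi_a$ must cancel---most plausibly by showing that the mapping $\sigma^{(\beta)}\colon f\mapsto f(\gamma\tau)$ of Section \ref{proofAtkin1}, composed with $W$, exchanges $\varphi(q^{1+4a})$ and $\varphi(-q^{1+4a})$ while interchanging $L^{(1)}_\alpha(W\tau)$ and $L^{(0)}_\alpha(W\tau)$, thereby producing the sign flip responsible for the cancellation. Because $\gamma$ was constructed by conjugating $W$ by the non-integer matrix $\left(\begin{smallmatrix}1 & 1/2 \\ 0 & 1\end{smallmatrix}\right)$ (equivalently, by $q\mapsto -q$) and because $W$ and $\gamma$ do not commute, producing such an identification intrinsically---rather than one Fourier coefficient at a time---appears to require new input beyond the material developed in the present paper.
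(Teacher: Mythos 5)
Note first that the paper does not prove this statement: it is left as Conjecture \ref{sellersthmA}, accompanied only by the remark that it ``can be established by a similarly careful manipulation of the progressions for $4n+1$ in each of the series.'' Your proposal is essentially a fleshed-out version of exactly that plan, and the reduction you perform is sound and genuinely clarifying. Using Theorem \ref{fieldfuncfix} to write $L^{(\beta)}_{\alpha}=A_{\alpha}(t)+B_{\alpha}(t)p^{(\beta)}_{a}$ with $\beta$-independent polynomials is legitimate, the identity $\varphi(q)+\varphi(-q)=2\varphi(q^{4})$ correctly disposes of the $\Sigma_a$ term, and the Paule--Radu fact that $t(W\tau)$ is supported on $q^{4\mathbb{Z}}$ lets you localize the entire question in the single $\alpha$-independent series $\Xi_a$. (For $a=0$ you could even avoid a second computation by the same $U_5$ trick used at the end of the proof of Theorem \ref{sellersthm}, since $5n\equiv 1\pmod 4$ if and only if $n\equiv 1\pmod 4$.)

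The gap is that the decisive step is asserted rather than carried out: you reduce the conjecture to the claim that the $q^{4n+1}$ subseries of $\Xi_a=\varphi(-q^{1+4a})\,p^{(0)}_{a}(W\tau)+\varphi(q^{1+4a})\,p^{(1)}_{a}(W\tau)$ vanishes, and then defer this to ``a finite check'' via the Ramanujan--Kolberg machinery without performing it or even exhibiting the Sturm-type bound that would make the check finite. Since this vanishing is precisely the content of the conjecture (everything else in your argument is bookkeeping), what you have is a correct and useful reduction of the conjecture to an $\alpha$-independent identity among finitely many eta quotients, not a proof. Your closing observation --- that a conceptual explanation would require understanding how conjugation by $\left(\begin{smallmatrix}1 & 1/2\\ 0 & 1\end{smallmatrix}\right)$ interacts with $W$ and the sign change in $\varphi$ --- is a fair diagnosis of why the paper's authors themselves stopped here.
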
  This, we are certain, can be established by a similarly careful manipulation of the progressions for $4n+1$ in each of the series.  We have only stopped for want of our time to other immediate demands.

\section{Further Work}\label{sectfin}

To the interested reader we emphasize five considerations which we deem important for future work.

\begin{enumerate}
\item Is our system of congruences for $c\psi_{2,\beta}(n)$ complete for $\beta=0,1$?  That is to say, is this pair part of a broader triple or quadruple of families all closely related via their proofs or their behavior through some associated Atkin--Lehner involutions?  Are there perhaps other mappings similar to $\sigma$ which would relate these families to others?  Indeed, we believe that this specific pair of congruence families is not so closely related to any others in the same way that they are to each other---but we cannot be certain.
\item What are the implications of identifying different congruence families with one another via homomorphisms which fix certain sub-module function spaces?
\item Do similar systems of congruence families exist for more general $c\psi_{k,\beta}(n)$, especially by holding a given $k$ fixed and varying $\beta$?
\item Is Conjecture \ref{conjB} accessible?  What implications does this pose for other congruence families, especially those associated with modular curves of large cusp count?
\item What are we to think of the beautiful yet utterly bizarre occurence of Ramanujan's phi function in Theorem \ref{sellersthm} which connects Theorems \ref{Thm1} and \ref{Thm12} via the chosen Atkin--Lehner involution?  Can this be generalized to other congruence families, again with emphasis on those associated with curves of large cusp count?  What of Conjecture \ref{sellersthmA}?
\end{enumerate}

\section{Acknowledgments}

This research was funded in part by the Austrian Science Fund (FWF) Principal Investigator Project 10.55776/PAT6428623, ``Towards a Unified Theory of Partition Congruences."  For open access purposes, the authors have applied a CC BY public copyright license to any author-accepted manuscript version arising from this submission.

In particular, the third author, Nicolas Allen Smoot, is the Principal Investigator of the aforementioned FWF Project, and he would like to thank the Austrian Government and People for their generous support.

All three authors would like to thank the anonymous referees, who provided careful and constructive advice which substantially improved this manuscript.

\end{document}